\theoremstyle{definition}
\newtheorem{defi}{Definition}[section]
\theoremstyle{plain}
\newtheorem{lemm}[defi]{Lemma}
\newtheorem{prop}[defi]{Proposition}
\newtheorem{theo}[defi]{Theorem}
\newtheorem{coro}[defi]{Corollary}
\theoremstyle{remark}
\newtheorem{rema}[defi]{Remark}
\newtheorem*{rema*}{Remark}
\newtheorem{exam}[defi]{Example}
\newtheorem*{exam*}{Example}
\newcommand{\PP}{\mathcal{P}}
\newcommand{\la}{\lambda}
\newcommand{\La}{\Lambda}
\newcommand{\ka}{\kappa}
\newcommand{\de}{\delta}
\newcommand{\DE}[1]{\de\left[#1\right]}
\newcommand{\Z}{\mathbb{Z}}
\newcommand{\lra}{\longrightarrow}
\newcommand{\sm}{\setminus}
\newcommand{\ti}{\tilde}
\renewcommand{\emptyset}{\varnothing}
\newcommand{\tikzitem}[2][1.0]{
	\begin{tikzpicture}[scale=#1]
		#2
	\end{tikzpicture}
}
\newcommand{\tikzbox}[3][]{
	\draw [#1] (#3,#2) rectangle +(-1,-1);
}
\newcommand{\tikzydiag}[2][]{
	\foreach \li [count=\i] in {#2}{
		\foreach \j in {1,...,\li}{
			\tikzbox[#1]{\i}{\j};
		}
	}
}
\newcommand{\tikzyfill}[2][gray]{
	\foreach \li [count=\i] in {#2}{
		\foreach \j in {1,...,\li}{
			\fill[#1] (\j,\i) rectangle +(-1,-1);
		}
	}
}
\newcommand{\Todo}[1]{}
\numberwithin{equation}{section}
\newcommand{\wh}{\widehat}
\newcommand{\Gr}{\mathrm{Gr}}
\newcommand{\RPP}{\mathrm{RPP}}
\newcommand{\id}{\mathrm{id}}
\newcommand{\Hom}{\mathrm{Hom}}
\newcommand{\Kth}[1][k,n]{K^*(\Gr(#1))}
\newcommand{\Kho}[1][k,n]{K_*(\Gr(#1))}
\newcommand{\mc}{\mathcal}
\newcommand{\mcO}{\mc{O}}
\newcommand{\mcI}{\mc{I}}
\newcommand{\skewyd}[3][1.0]{
	\tikzitem[#1]{
		\tikzyfill{#3}
		\tikzydiag{#2}
	}
}
\newcommand{\dgYoungDiagSize}{.2}
\newcommand{\yd}[1]{
	\ifthenelse{\equal{#1}{}}{
		\emptyset
	}
	{
		\skewyd[\dgYoungDiagSize]{#1}{}
	}
}
\newcommand{\sk}[2]{\skewyd[\dgYoungDiagSize]{#1}{#2}}
\newcommand{\dg}[1]{g_{\yd{#1}}}
\newcommand{\dgs}[2]{g_{\sk{#1}{#2}}}
\newcommand{\Itemize}[1]{
	\begin{itemize}
		\setlength{\parskip}{0cm} % 段落間
		\setlength{\itemsep}{0cm} % 項目間
		#1
	\end{itemize}
}
\newcommand{\Enumerate}[1]{
	\begin{enumerate}[label=\textup{(\arabic*)}]
		\setlength{\parskip}{0cm} % 段落間
		\setlength{\itemsep}{0cm} % 項目間
		#1
	\end{enumerate}
}
\def\para{%
	\setlength{\unitlength}{1pt}%
	\thinlines %
	\begin{picture}(12, 12)%
	\put(0,0){/}
	\put(2,0){/}
	\end{picture}%
	\hspace{-1.5mm}%
}%
\newcommand{\TitleContent}{Automorphisms on the ring of symmetric functions and stable and dual stable Grothendieck polynomials}
\title{\TitleContent}
\author{Motoki Takigiku}
\date{\today}
\newcommand{\AcknowledgementContent}{
	The author is grateful to
	Itaru Terada
	for many valuable discussions and comments.
	This work was supported by
	the Program for Leading Graduate
	Schools, MEXT, Japan.
}
\newcommand{\AbstractContent}{
	The dual stable Grothendieck polynomials $g_\lambda$ and their sums $\sum_{\mu\subset\lambda} g_\mu$ (which represent $K$-homology classes of boundary ideal sheaves and structure sheaves of Schubert varieties in the Grassmannians) have the same product structure constants. In this paper we first explain that the ring automorphism $g_\lambda\mapsto\sum_{\mu\subset\lambda} g_\mu$ on the ring of symmetric functions is described as the operator $F^\perp$, the adjoint of the multiplication $(F\cdot)$, by a ``group-like'' element $F=\sum_{i} h_i$ where $h_i$ is the complete symmetric function. Next we give a generalization: starting with another ``group-like'' elements $\sum_{i} t^i h_i$, we obtain a deformation with a parameter $t$ of the ring automorphism above, as well as identities involving stable and dual stable Grothendieck polynomials.%
}
\begin{document}

\maketitle

\begin{abstract}
	\AbstractContent
\end{abstract}

%\tableofcontents

\section{Introduction}

The dual stable Grothendieck polynomials $g_\la$
are a certain family of inhomogeneous symmetric functions
parametrized by interger partitions $\la$.
These functions are a $K$-theoretic deformation of the Schur functions,
and dual to another deformation called
stable Grothendieck polynomials $G_\la$.

Historically
the stable Grothendieck polynomials (parametrized by permutations)
were introduced by Fomin and Kirillov \cite{MR1394950}
as a stable limit of the Grothendieck polynomials of
Lascoux--Sch\"utzenberger \cite{MR686357}.
In \cite{MR1946917} Buch gave a 
combinatorial formula for the stable Grothendieck polynomials $G_\la$
for partitions using so-called set-valued tableaux,
and showed that their span
$\bigoplus_\la \Z G_\la$ is a bialgebra
and that
the $K$-theory
of a Grassmannian
is isomorphic to a quotient of it.

The dual stable Grothendieck polynomials $g_\la$
were
introduced by Lam and Pylyavskyy \cite{MR2377012}
as generating functions of reverse plane partitions
(Definition \ref{defi:gla}),
and shown to be the dual basis for 
$G_\la$
via the Hall inner product
and represent the $K$-homology classes of the ideal sheaves of boundaries of Schubert varieties in Grassmannians.

As seen in Remark \ref{rema:Gg_geom},
the sums $\sum_{\mu\subset\la}g_\mu$ represent 
the classes in $K$-homology of structure sheaves of Schubert varieties.
%while $g_\la$ represent boundary ideal sheaf classes.
%
In \cite{dualstablesum} the author showed that
the bases $\{g_\la\}_\la$ and $\{\sum_{\mu\subset\la}g_\mu\}_\la$ have the same product structure constants,
i.e.\,the linear map $g_\la\mapsto\sum_{\mu\subset\la}g_\mu$, which we denote by $I$, is a ring automorphism on
the ring of symmetric functions $\La$,
by showing the Pieri rules with respect to these bases have the same coefficients.

In this paper we first give alternative descriptions for this map $I$:
it is realized as the substitution $f(x_1,x_2,\cdots)\mapsto f(1,x_1,x_2,\cdots)$ \eqref{eq:I=i*id}.
The key observation used there is
that 
the substitution $f\mapsto f(1,0,0,\cdots)$
maps
$g_{\la/\mu}$ to $1$
for any skew shape $\la/\mu$ (Proposition \ref{theo:i});
then $I$ is a certain composition of this map and the coproduct on $\La$.
We also give two formulas for the image of $g_{\la/\mu}$ under $I$ \eqref{eq:I_skew},
which generalizes $I(g_\la)=\sum_{\nu\subset\la}g_\nu$ and
implies some identities involving skew dual stable Grothendieck polynomials.

Next we give another description using the Hopf structure on $\La$ more explicitly.
Due to the coalgebra structure of $\La$ 
and the identification $\La^*\simeq\wh\La$ (completion of $\La$) via the Hall inner product,
for any $F\in\wh\La$ there corresponds a linear map $F^\perp\colon\La\lra\La$,
the adjoint of the multiplication map $(F\cdot)\colon\wh\La\lra\wh\La$.
We explain in Lemma \ref{theo:group-like} that 
%the condition that 
$F^\perp$ is a ring automorphism 
if and only if
%is equivalent to that 
$F$ has ``group-like'' property.
Then we show that the map $I$ is realized as $H(1)^\perp$ 
where $H(1):=\sum_{i}h_i=\sum_{\la} G_\la$ is group-like (Theorem \ref{theo:H(1)_overall}),
while $I^{-1}=E(-1)^\perp$ where $E(-1):=\sum_{i}(-1)^i e_i=1- G_1$.
Besides we explain that
from presentations of the map $(H(1)\cdot)$
with respect to the basis $\{G_{\la}\}$
we can obtain identities involving $G_\la$, \eqref{eq:I*(1)} and \eqref{eq:D*(1)}.

Finally we give a generalization:
starting with another group-like elements $H(t)=\sum_{i\ge 0} t^i h_i$ and $E(t)=\sum_{i\ge 0} t^i e_i$,
we obtain a deformation with a parameter $t$ of the automorphism $I$
and hence the identities obtained above involving $G_\la$ and $g_\la$.

\subsection*{Organization}
In Section \ref{sect:Prel} we recall 
Hopf algebras, symmetric functions 
and stable Grothendieck and dual stable Grothendieck polynomials.
In Section \ref{sect:algebraic}
we see connection between group-like elements in $\wh\La$ and 
automorphisms on $\La$,
as well as our main examples of group-like elements $H(t)$ and $E(t)$.
Section \ref{sect:I} %and \ref{sect:E(-1)}, %\ref{sect:Ht} and \ref{sect:Et}
contains the first main results:
after proving $I(f(x))=f(1,x)$ in Section \ref{sect:I:f(1,x)},
we show $I=H(1)^\perp$ and $I^{-1}=E(-1)^\perp$ in Section \ref{sect:I:H1_perp}.

In following two sections we give a generalization.
In Section \ref{sect:Ht}
we give descriptions for 
the automorphism $H(t)^\perp$ and the multiplication map $(H(t)\cdot)$
using the bases $\{g_{\la}\}$ and $\{G_\la\}$, respectively,
while
in Section \ref{sect:Et}
we treat $E(t)^\perp$ and $(E(t)\cdot)$.
In Section \ref{sect:eg}
we give some examples.

Note that,
although 
the results in Section \ref{sect:I} are special cases of
that of Section \ref{sect:Ht} and \ref{sect:Et},
they need to be shown first
according to the proof given in this paper.

\Todo{check: $\mathrm{char}(K)=?$}
\Todo{involution $\omega$}

\subsection*{Acknowledgement}

\AcknowledgementContent

\section{Preliminaries}\label{sect:Prel}

Throughout this paper,
we fix a commutative ring $K$ and 
assume that any modules, algebras, morphisms etc.\ are over $K$.

\subsection{Hopf algebra}\label{sect:Prel::Hopf}

We recall some generalities on the Hopf algebra.
For more details
we refer the reader to \cite{MR0252485,MR594432,1409.8356} for example.

An {\em algebra} $A$ is a $K$-module
equipped with
a {\em product} (or {\em multiplication}) $m=m_A\colon A\otimes A\lra A$ and
a {\em unit} $u=u_A\colon K\lra A$
satisfying
$m\circ(m\otimes\id) = m\circ(\id\otimes m)$ and
$m\circ(\id\otimes u)=\id=m\circ(u\otimes\id)$.
A {\em coalgebra} $C$ is a $K$-module
equipped with
a {\em coproduct} (or {\em comultiplication}) $\Delta=\Delta_C\colon C\lra C\otimes C$ and
a {\em counit} $\epsilon=\epsilon_C\colon C\lra K$
satisfying
$(\Delta\otimes\id)\circ\Delta = (\id\otimes \Delta)\circ\Delta$ and
$(\id\otimes \epsilon)\circ\Delta=\id=(\epsilon\otimes\id)\circ\Delta$.
A $K$-linear map $\varphi\colon A\lra B$ between algebras is
an {\em algebra morphism} if
$\varphi\circ m_A = m_B\circ(\varphi\otimes\varphi)$ and
$\varphi\circ u_A = u_B$.
%where $m_A, m_B$ and $u_A,u_B$ are the products and units for $A,B$.
%
A $K$-linear map $\varphi\colon C\lra D$ between coalgebras is
a {\em coalgebra morphism} if
$(\varphi\otimes\varphi)\circ\Delta_C = \Delta_D\circ\varphi$ and
$\epsilon_C = \epsilon_D\circ\varphi$.
%where $\Delta_C, \Delta_D$ and $\epsilon_C,\epsilon_D$
%are the coproducts and counits for $C,D$.
%
A $K$-module $A$ equipped with $m,u,\Delta,\epsilon$
is a {\em bialgebra} if
$(A,m,u)$ is an algebra, 
$(A,\Delta,\epsilon)$ is a coalgebra, and
the following equivalent conditions hold:
(a) $\Delta,\epsilon$ are algebra morphisms;
(b) $m, u$ are coalgebra morphisms.
A bialgebra $A$ equipped with an \emph{antipode} map
$S\colon A\lra A$ satisfying
$m\circ(S\otimes\id)\circ\Delta=u\circ\epsilon$
is called a \emph{Hopf algebra}.

\subsubsection{duals}

For a $K$-module $A$,
let $A^*=\Hom(A,K)=\{f\colon A\lra K\colon K\text{-linear}\}$ and
$(\,,)=(\,,)_A\colon A^*\times A\lra K\,;\,(f,a)=f(a)$.
For a graded $K$-module $A=\bigoplus_{n\ge 0}A_n$,
we denote by $A^o$ the \emph{graded dual} $\bigoplus_{n}A_n^*$,
and $A$ is called {\em of finite type} if every $A_n$ is a finite free $K$-module.
For any coalgebra $C$, its dual $C^*$ is an algebra by
\begin{equation}\label{eq:dual_copro}
(m_{C^*}(f\otimes g), a)_C = (f\otimes g, \Delta_C(a))_{C\otimes C}
\end{equation}
for $f,g\in C^*$ and $a\in C$.
If an algebra $A$ is a finite free $K$-module 
(resp.\,$A$ is a graded algebra of finite type),
then its dual $A^*$ (resp.\,graded dual $A^o$) is a coalgebra by
\begin{equation}\label{eq:dual_pro}
(\Delta_{A^*}(f), a\otimes b)_{A\otimes A}=(f,ab)_{A}
\end{equation}
for $f\in A^*$ (resp.\,$A^o$) and $a,b\in A$.
\Todo{
Assume an algebra $A$ is finite free or of finite type and
let $\{x_i\}_{i\in I}$ be a basis of $A$ and
$\{f_i\}_{i\in I}$ be the dual basis of the coalgebra $A^*$ (or $A^o$).
Then the product structure constants for $\{x_i\}$ coincide with
the coproduct structure constants for $\{f_i\}$:
if $x_i x_j=\sum_{k} a_{ij}^k x_k$ then
$\Delta(f_k)=\sum_{i,j} a_{ij}^k f_i\otimes f_j$.}

For a coalgebra $C$ and an algebra $A$, the space of linear maps
$\Hom(C,A)$ becomes an associative algebra by the \emph{convolution product} $*$
defined by 
$f*g = m_A\circ(f\otimes g)\circ\Delta_C$.
Then $u_A\circ\epsilon_C$ is the identity for $*$,
and the convolution product on $C^*=\Hom(C,K)$ coincides with
the product given in \eqref{eq:dual_copro}.

\subsubsection{Module and comodule morphisms}\label{sect:Prel::Hopf::comod}

For a coalgebra $C$,
a linear map
$\phi\colon C\lra C$ is \emph{$C$-comodule morphism} if 
$\Delta\circ\phi=(\phi\otimes\id)\circ\Delta$.
For an algebra $A$,
a linear map
$\psi\colon A\lra A$ is \emph{$A$-module morphism} if 
$\psi\circ m=m\circ(\psi\otimes\id)$.

\begin{lemm}\label{theo:comodule_hom}
	Let $C$ be a coalgebra and $C^*$ its dual algebra.
	For a linear map $\phi\colon C\lra C$, the following are equivalent:
	$(1)$ $\phi\colon C\lra C$ is a $C$-comodule morphism.
	$(2)$ $\phi^*\colon C^*\lra C^*$ is a $C^*$-module morphism.
\end{lemm}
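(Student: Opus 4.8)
The plan is to unwind both conditions against the canonical pairing, using the defining relation \eqref{eq:dual_copro} for the product of $C^*$ together with the definition of the adjoint $\phi^*$, so that the asserted equivalence reduces to a formal ``take adjoints'' statement. Throughout, write $\Delta(a)=\sum a_{(1)}\otimes a_{(2)}$ in Sweedler notation, and recall that $(\phi^*(f),a)=(f,\phi(a))$ and that $(fg,a)=(f\otimes g,\Delta(a))=\sum f(a_{(1)})g(a_{(2)})$ for $f,g\in C^*$ and $a\in C$.

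The key computation is the following: for all $f,g\in C^*$ and $a\in C$, the defining relations give
\begin{align*}
(\phi^*(fg),a)&=(fg,\phi(a))=(f\otimes g,\Delta\phi(a)),\\
\bigl(\phi^*(f)\cdot g,\,a\bigr)&=\bigl(\phi^*(f)\otimes g,\Delta(a)\bigr)=\sum f(\phi(a_{(1)}))g(a_{(2)})=\bigl(f\otimes g,(\phi\otimes\id)\Delta(a)\bigr).
\end{align*}
Hence the $C^*$-module identity $\phi^*\circ m=m\circ(\phi^*\otimes\id)$, i.e.\ $\phi^*(fg)=\phi^*(f)\cdot g$ for all $f,g$, is equivalent to $\bigl(f\otimes g,\ \Delta\phi(a)-(\phi\otimes\id)\Delta(a)\bigr)=0$ for all $f,g\in C^*$ and $a\in C$. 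The implication $(1)\Rightarrow(2)$ is then immediate, since $\Delta\circ\phi=(\phi\otimes\id)\circ\Delta$ makes the bracketed element of $C\otimes C$ vanish identically.

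For $(2)\Rightarrow(1)$ one must conclude $\Delta\phi(a)=(\phi\otimes\id)\Delta(a)$ in $C\otimes C$ from the vanishing of all of its pairings with the decomposable functionals $f\otimes g$. This is the only non-formal point, and the place where a hypothesis on $C$ enters: it uses that the functionals of the form $f\otimes g$ ($f,g\in C^*$) separate the points of $C\otimes C$ --- equivalently, that the canonical map $C\otimes C\to(C^*\otimes C^*)^*$ is injective --- which holds in the cases of interest (e.g.\ $C$ free over $K$, or $C$ graded of finite type with $C^*$ replaced by the graded dual, in which case $C^{**}\cong C$ and the argument literally becomes the statement that $\phi\mapsto\phi^*$ is a contravariant bijection on endomorphisms carrying the comodule condition to the module condition via $(\phi\otimes\id)^*=\phi^*\otimes\id$ and $\Delta^*=m_{C^*}$). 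I would also record the reformulation that isolates the content: applying $\epsilon\otimes\id$ to the comodule identity shows $(1)$ holds iff $\phi=(\lambda\otimes\id)\circ\Delta$ for $\lambda=\epsilon\circ\phi$ (the converse being coassociativity), while --- since $\epsilon$ is the unit of $C^*$ --- $(2)$ holds iff $\phi^*=(\lambda'\cdot)$ for $\lambda'=\phi^*(\epsilon)=\epsilon\circ\phi$; and these two are adjoint to one another because $\bigl((\lambda\otimes\id)\circ\Delta\bigr)^*=(\lambda\cdot)$ for every $\lambda\in C^*$ by \eqref{eq:dual_copro}. So the main obstacle is not any calculation but staying honest about this duality/finiteness issue; everything else is routine Sweedler bookkeeping.
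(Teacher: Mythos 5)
Your proof is correct and follows essentially the same route as the paper's one-line proof: unwind both conditions against the pairing using $(f,\phi(a))=(\phi^*(f),a)$ and $(m(f\otimes g),a)=(f\otimes g,\Delta(a))$. You are also right that the direction $(2)\Rightarrow(1)$ requires the decomposable functionals $f\otimes g$ to separate the points of $C\otimes C$ --- a hypothesis the paper's terse proof leaves implicit --- but this holds in the only case the lemma is applied to ($C=\La$, free with graded dual identified with $\wh\La$), so nothing is lost.
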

\begin{proof}
	It easily follows from
	$(f,\phi(a)) = (\phi^*(f),a)$ and 
	$(m(f\otimes g), a) = (f\otimes g, \Delta(a))$
	(for $f,g\in C^*$ and $a\in C$).
\end{proof}

For a coalgebra $C$ and $f\in C^*$, the map
$f^\perp\colon C\lra C$ is defined by
$f^\perp(c) = \sum_{(c)} (f, c_1) c_2$
where we write $\Delta(c)=\sum_{(c)}c_1\otimes c_2$ by the Sweedler notation.
In other words $f^\perp=(f\otimes\id)\circ\Delta$.
By \eqref{eq:dual_copro}, $f^\perp$ is the adjoint of
$(f\cdot)\colon C^*\lra C^*\,;\,g\mapsto fg$,
i.e.\,$(fg,c) = (g,f^\perp(c))$.
Since the multiplication map $(f\cdot)$ is a $C^*$-module morphism,
by Lemma \ref{theo:comodule_hom} we see that
$f^\perp$ is a $C$-comodule morphism.
Conversely, any $C$-comodule endomorphism on $C$ has the form $f^\perp$:
\begin{lemm}
	\Enumerate{
		\item
			For an algebra $A$,
			if $\psi\colon A\lra A$ is an $A$-module morphism then
			$\psi$ is the multiplication by $\psi(1_A)$.
		\item
			For a coalgebra $C$,
			if $\phi\colon C\lra C$ is a $C$-comodule morphism then
			$\phi = (\phi^*(1_{C^*}))^\perp$.
	}
\end{lemm}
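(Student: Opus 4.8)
The plan is to handle the two parts in parallel: part (1) is an immediate unwinding of the defining identity, and part (2) is its formal dual, with the only genuine care going into identifying the unit $1_{C^*}$.

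For (1): by hypothesis $\psi\circ m = m\circ(\psi\otimes\id)$. I would simply evaluate both sides on an element of the form $1_A\otimes a$ and use the unit axiom $m\circ(u_A\otimes\id)=\id$, obtaining $\psi(a)=\psi(m(1_A\otimes a))=m(\psi(1_A)\otimes a)=\psi(1_A)\cdot a$, which is exactly the claim. There is no obstacle here.

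For (2): first recall that the unit $1_{C^*}$ of the dual algebra $C^*$ is the counit $\epsilon_C$ (as noted in the text, $u_K\circ\epsilon_C$ is the identity for the convolution product, and the convolution product on $C^*$ is the product \eqref{eq:dual_copro}). Hence $\phi^*(1_{C^*})=1_{C^*}\circ\phi=\epsilon_C\circ\phi$, and by the definition $f^\perp=(f\otimes\id)\circ\Delta$ it suffices to show
\[
\phi \;=\; \bigl((\epsilon_C\circ\phi)\otimes\id\bigr)\circ\Delta .
\]
Writing $\Delta(c)=\sum_{(c)}c_1\otimes c_2$ in Sweedler notation, the right-hand side sends $c$ to $\sum_{(c)}\epsilon_C(\phi(c_1))\,c_2$. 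Now invoke the comodule-morphism hypothesis $\Delta\circ\phi=(\phi\otimes\id)\circ\Delta$, i.e.\ $\Delta(\phi(c))=\sum_{(c)}\phi(c_1)\otimes c_2$; applying $\epsilon_C\otimes\id$ to both sides and using the counit axiom $(\epsilon_C\otimes\id)\circ\Delta=\id$ gives $\phi(c)=\sum_{(c)}\epsilon_C(\phi(c_1))\,c_2$, which is precisely the displayed right-hand side. Thus $\phi=(\phi^*(1_{C^*}))^\perp$.

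One could also argue purely formally: by Lemma \ref{theo:comodule_hom}, $\phi$ being a $C$-comodule morphism means $\phi^*$ is a $C^*$-module morphism, so part (1) applied to $\psi=\phi^*$ gives $\phi^*=(\phi^*(1_{C^*})\cdot)$, and dualizing back (using that $f^\perp$ is the adjoint of $(f\cdot)$ and $\phi$ is the adjoint of $\phi^*$) yields the claim. I prefer the explicit Sweedler computation above because this formal route implicitly uses that the pairing between $C$ and $C^*$ separates points of $C$, which need not hold over a general commutative ring $K$. So the only ``hard'' point is bookkeeping — correctly pinning down $1_{C^*}=\epsilon_C$ and the order of composition in $\phi^*$; the rest is the counit axiom.
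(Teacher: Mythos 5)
Your proof is correct. Part (1) is exactly the one-line unwinding the paper dismisses as ``clear.'' For part (2) you take a genuinely different route from the paper: the paper derives (2) from (1) by the formal dualization argument --- $\phi$ a comodule morphism $\Rightarrow$ $\phi^*$ a $C^*$-module morphism (Lemma \ref{theo:comodule_hom}) $\Rightarrow$ $\phi^*=(\phi^*(1)\cdot)$ by (1) $\Rightarrow$ $\phi=(\phi^*(1))^\perp$ by adjointness --- which is precisely the alternative you sketch and then set aside. Your preferred argument instead verifies $\phi=\bigl((\epsilon_C\circ\phi)\otimes\id\bigr)\circ\Delta$ directly in Sweedler notation from the comodule identity and the counit axiom. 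Your stated reason for the choice is a real one: the paper's last step concludes equality of two maps $C\lra C$ from equality of their adjoints $C^*\lra C^*$, which requires the canonical map $C\lra C^{**}$ to be injective; this holds for the free modules the paper actually applies the lemma to (such as $\La$ with basis $\{s_\la\}$), but not for an arbitrary coalgebra over an arbitrary commutative ring $K$, which is how the lemma is stated. So your computation is slightly longer but proves the statement at the stated level of generality, whereas the paper's proof is shorter but tacitly assumes the pairing separates points of $C$. Your identification $1_{C^*}=\epsilon_C$ and $\phi^*(1_{C^*})=\epsilon_C\circ\phi$ is also correct.
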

\begin{proof}
	(1) is clear.
	(2) follows from (1),
	Lemma \ref{theo:comodule_hom} and 
	the adjointness of $(\phi^*(1)\cdot)$ and $\phi^*(1)^\perp$.
\end{proof}

\subsection{Symmetric functions}

For basic definitions for symmetric functions,
see for instance \cite[Chapter I]{MR1354144}.

Let $\La$ ($=\La(x)=\La_{K}=\La_K(x)$) be the ring of symmetric functions,
namely the set of all symmetric formal power series 
of bounded degree
in variable $x=(x_1,x_2,\dots)$ 
with coefficients in $K$.
We omit the variable $x$ when no confusion arise.
Let $\wh\La$ be its completion,
consisting of all symmetric formal power series
(with possibly unbounded degree).
The Schur functions $s_\la$ ($\la\in\PP$) are a family of homogeneous symmetric functions
satisfying
%a $\Z$-basis of $\La$, i.e.\,
$\La=\bigoplus_{\la\in\PP}K s_\la$ 
and
$\wh\La=\prod_{\la\in\PP}K s_\la$.
%$\wh\La$ be the completion 
%consisting of infinite linear combinations of $s_\la$.
%Then $\wh\La$ is the set of all symmetric power series in 
%$\Z\llbracket x_1,x_2,\dots \rrbracket$.

The {\em Hall inner product} $(\,,)$ is a bilinear form on $\La$
for which 
the Schur functions form an orthonormal basis,
i.e.\,$(s_\la,s_\mu)=\delta_{\la\mu}$.
This is naturally extended to
$(\,,)\colon\wh\La\times\La\lra K$,
whence we can identify
$\wh\La$ with $\La^*$ %=\Hom(\La,\Z)$, %the dual of $\La$, 
and
$\La$ with $\La^o=\bigoplus_{n\ge 0} \La_n^*$. % = \bigoplus_{n\ge 0}\Hom(\La_n,\Z)$.
%the graded dual of $\La$.
Here $\La_n$ denotes the homogeneous component of $\La$ with degree $n$.

The ring $\La$ is a Hopf algebra with
a product $m\colon\La\otimes\La\lra\La\,;\,f\otimes g\mapsto fg$,
a unit $u\colon K\lra\La\,;\,1\mapsto 1$, 
a coproduct $\Delta\colon\La=\La(x)\lra\La(x,y)\hookrightarrow\La(x)\otimes\La(y)\,;\,f(x)\mapsto f(x,y)$,
a counit $\epsilon\colon\La\lra K\,;\,f\mapsto f(0,0,\dots)$, 
i.e.\,$\epsilon(s_\la)=\delta_{\la\emptyset}$,
and an antipode $S\colon\La\lra\La\,;\,s_\la\mapsto(-1)^{|\la|}s_{\la'}$.
Here $\la'$ denotes the transpose of $\la\in\PP$.
The coincidence between
the coefficients in the Littlewood-Richardson rules
$s_\mu s_\nu=\sum_{\la} c^\la_{\mu\nu}s_\la$ and
$\Delta(s_\la)=\sum_{\mu,\nu} c^\la_{\mu\nu}s_\mu\otimes s_\nu$
%especially the coincidence between the structure constants of the product and coproduct,
implies that
%we see that 
$\La$ is self-dual,
i.e.\,the Hopf structure on $\La^o$ 
via \eqref{eq:dual_copro} and \eqref{eq:dual_pro}
coincides with
one coming from the identification $\La\simeq\La^o$.
Note that $\wh\La\simeq\La^*$ is an algebra %by \eqref{eq:dual_copro}
but not a coalgebra,
since if $f\in\wh\La$ has unbounded degree then $f(x,y)$ may be unable to be written as a finite sum of $f_1(x)f_2(y)$ for $f_1,f_2\in\wh\La$.

%\subsubsection{Hall inner product and skewing operators}

For $F\in\wh\La$,
we have linear maps
\Itemize{
	\item
	$(F,-)\colon\La\lra K\,;\,f\mapsto (F,f)$, and
	\item
	$F^\perp\colon\La\lra\La\,;\,f\mapsto\sum (F,f_1)f_2$
}
where 
%$(\,,)$ is the Hall inner product and
we put $\Delta(f)=\sum f_1\otimes f_2$ for $f\in\La$ by the Sweedler notation.
By the identification $\wh\La\simeq\La^*$
this notation is the same as given in Section \ref{sect:Prel::Hopf}.
Note that 
\begin{equation}\label{eq:Fperp}
F^\perp = ((F,-)\otimes\id)\circ\Delta = (\id\otimes(F,-))\circ\Delta
\end{equation}
where the second equality is by cocommutativity.
%and that $F^\perp$ is the adjoint of the multiplication by $F$:
%$(FG,f) = (G,F^\perp(f))$ for $G\in\wh\La$ and $f\in\La$.
%
%
We also have
\begin{equation}\label{eq:F-}
(F,-) = \epsilon\circ F^\perp
\end{equation}
since
$\epsilon\circ F^\perp 
= \epsilon\circ((F,-)\otimes\id)\circ\Delta 
= ((F,-)\otimes\epsilon)\circ\Delta 
= (F,-)*\epsilon = (F,-)$.
The following lemma is standard:
\begin{lemm}\label{theo:FG}
	For $F,G\in\wh\La$,
	\Enumerate{
		\item 
		$(FG,-) = (F,-)*(G,-)$ where
		$*$ denotes the convolution product on
		$\Hom(\La,K)$.
		\item 
		$(FG)^\perp=G^\perp\circ F^\perp$ $(=F^\perp\circ G^\perp)$.
	}
\end{lemm}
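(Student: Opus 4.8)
The plan is to verify both parts by pairing against arbitrary $f\in\La$ and reducing to the Hopf-algebraic identities already set up in the excerpt. For part (1), I would start from the defining adjointness $(F^\perp(f),-)$-type relation, but more directly: recall that $(F,-)$ is an element of $\La^* \simeq \wh\La$ and that the convolution product $*$ on $\Hom(\La,K)$ coincides with the product induced by the coproduct $\Delta$ via \eqref{eq:dual_copro}. Since $\La$ is self-dual, multiplication in $\wh\La\simeq\La^*$ is \emph{exactly} the dual of $\Delta$, so for any $f\in\La$ we compute
\[
(FG,f) = (F\otimes G, \Delta(f)) = \sum (F,f_1)(G,f_2) = \bigl((F,-)*(G,-)\bigr)(f),
\]
using the Sweedler notation $\Delta(f)=\sum f_1\otimes f_2$ and the definition of $*$ as $m_K\circ((F,-)\otimes(G,-))\circ\Delta$. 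This is essentially \eqref{eq:dual_copro} read in the self-dual setting, so part (1) is immediate once one is careful that the product on $\wh\La$ really is the one dual to $\Delta$ (which the excerpt established when discussing self-duality of $\La$).

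For part (2), I would show $(FG)^\perp = G^\perp\circ F^\perp$ by testing against $f\in\La$ and using coassociativity of $\Delta$. Write $(\Delta\otimes\id)\circ\Delta(f) = (\id\otimes\Delta)\circ\Delta(f) = \sum f_1\otimes f_2\otimes f_3$. Then
\[
G^\perp(F^\perp(f)) = G^\perp\Bigl(\sum (F,f_1) f_2\Bigr) = \sum (F,f_1)(G,f_2) f_3,
\]
where I have applied $G^\perp = ((G,-)\otimes\id)\circ\Delta$ to each $f_2$ and invoked coassociativity to relabel. On the other hand, using \eqref{eq:Fperp} and part (1),
\[
(FG)^\perp(f) = \bigl(((FG,-))\otimes\id\bigr)\circ\Delta(f) = \sum (FG, f_1)\, f_2 = \sum \bigl((F,-)*(G,-)\bigr)(f_1)\, f_2,
\]
and expanding the convolution via the coproduct of $f_1$ together with coassociativity gives $\sum (F,f_1)(G,f_2)f_3$, matching the previous display. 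The parenthetical equality $G^\perp\circ F^\perp = F^\perp\circ G^\perp$ then follows either by symmetry of the expression $\sum(F,f_1)(G,f_2)f_3$ under swapping $F,G$ combined with cocommutativity of $\Delta$, or simply by observing that $FG=GF$ in $\wh\La$.

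I do not anticipate a genuine obstacle here: the statement is labeled ``standard'' and every ingredient — the self-duality of $\La$, coassociativity, cocommutativity, the formula \eqref{eq:Fperp} for $F^\perp$, and the identification of $*$ with the dual of $\Delta$ — is already in place in the excerpt. The one point requiring mild care is bookkeeping with Sweedler notation across coassociativity (making sure the three-fold coproduct is unambiguous and that the pairings are applied to the correct tensor slots); with cocommutativity available, even the order of slots does not matter, so the calculation is essentially forced. An alternative, even shorter route for part (2) is to dualize: $(FG)^\perp$ is the adjoint of multiplication by $FG$ on $\La^*$, which factors as (mult. by $G$)$\circ$(mult. by $F$), and taking adjoints reverses the composition, yielding $G^\perp\circ F^\perp$; this packages the same computation using Lemma~\ref{theo:comodule_hom}-style adjointness but makes the reversal of order transparent.
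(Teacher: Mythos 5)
Your proof is correct. The paper gives no proof of this lemma at all --- it is stated as ``standard'' --- and your Sweedler-notation computation (part (1) from the identification of the power-series product on $\wh\La$ with the convolution product dual to $\Delta$ via self-duality, part (2) from coassociativity, with commutativity of $\wh\La$ giving the parenthetical equality) is exactly the standard argument being invoked; the one genuinely non-automatic point, that the product of $F,G\in\wh\La$ as symmetric power series agrees with their product as elements of $\La^*$ under \eqref{eq:dual_copro}, is the self-duality statement the paper records earlier, and you correctly flag it.
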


%\subsubsection{comodule endomorphisms of $\La$ and module endomorphisms of $\wh\La$}

By arguments in Section \ref{sect:Prel::Hopf::comod} we have the following lemmas.

\begin{lemm}\label{theo:Fperp}
	For $F\in\wh\La$,
	\Enumerate{
		\item
		$F^\perp\colon\La\lra\La$ is a $\La$-comodule morphism.
		\item 
		$(F^\perp)^*=(F\cdot)$.
		Namely
		$(FG,f) = (G,F^\perp(f))$ for $G\in\wh\La$ and $f\in\La$.
	}
	
\end{lemm}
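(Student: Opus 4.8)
The plan is to reduce both statements to the general coalgebra machinery of Section~\ref{sect:Prel::Hopf::comod}, applied to the coalgebra $C=\La$ together with its dual algebra $C^*=\wh\La$ (under the identification $\wh\La\simeq\La^*$ furnished by the Hall inner product). The one structural point to keep in mind is that $\La$ is a genuine coalgebra while $\wh\La$ is merely an algebra, so I will invoke coalgebra-side statements only for $\La$ itself; Lemma~\ref{theo:comodule_hom} is phrased at exactly that level of generality, hence it is available.

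For part~(2) I would just unwind the definitions. By \eqref{eq:Fperp}, for $f\in\La$ with $\Delta(f)=\sum f_1\otimes f_2$ we have $F^\perp(f)=\sum(F,f_1)f_2$, so that for any $G\in\wh\La$
\[
(G,F^\perp(f))=\sum(F,f_1)(G,f_2)=(F\otimes G,\Delta(f))=(FG,f),
\]
where the last equality is the defining property \eqref{eq:dual_copro} of the dual algebra structure on $\wh\La\simeq\La^*$. This is precisely the assertion $(F^\perp)^*=(F\cdot)$.

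For part~(1), multiplication $(F\cdot)\colon\wh\La\lra\wh\La$ by a fixed element is visibly a $\wh\La$-module morphism, by associativity of the product on $\wh\La$. Combining this with part~(2) and Lemma~\ref{theo:comodule_hom} (for $C=\La$) immediately gives that $F^\perp$ is a $\La$-comodule morphism. If one prefers an argument not passing through the dual, one can verify $\Delta\circ F^\perp=(F^\perp\otimes\id)\circ\Delta$ directly: writing out $\Delta$ iteratively, both sides send $f$ to $\sum(F,f_1)f_2\otimes f_3$, which follows from coassociativity together with \eqref{eq:Fperp}; note that no cocommutativity is needed here.

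There is no genuine obstacle: the whole content is bookkeeping with the identifications $\La\simeq\La^o$ and $\wh\La\simeq\La^*$. The only point requiring a moment's care is keeping track of which side is the (co)algebra — one must not treat $\wh\La$ as a coalgebra — but since all the relevant general lemmas of Section~\ref{sect:Prel::Hopf::comod} are stated for an arbitrary coalgebra and its full dual, specialization to $C=\La$ is frictionless.
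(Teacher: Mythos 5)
Your proof is correct and follows the same route as the paper: the paper simply cites ``arguments in Section~\ref{sect:Prel::Hopf::comod},'' i.e.\ the adjointness $(FG,f)=(G,F^\perp(f))$ coming from \eqref{eq:dual_copro} and the fact that $(F\cdot)$ being a $\wh\La$-module morphism forces $F^\perp$ to be a $\La$-comodule morphism via Lemma~\ref{theo:comodule_hom}. Your extra remark that the direct verification of $\Delta\circ F^\perp=(F^\perp\otimes\id)\circ\Delta$ needs only coassociativity (not cocommutativity) is a correct and harmless addition.
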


\begin{lemm}\label{theo:cohom_mult}
	Let $\phi\colon\La\lra\La$ be a $\La$-comodule morphism.
	Then the dual map $\phi^*\colon\wh\La\lra\wh\La$
	is the multiplication by $\phi^*(1)$.
	Moreover $\phi = (\phi^*(1))^\perp$.
\end{lemm}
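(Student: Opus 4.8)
The statement to prove is Lemma~\ref{theo:cohom_mult}: if $\phi\colon\La\lra\La$ is a $\La$-comodule morphism, then $\phi^*\colon\wh\La\lra\wh\La$ is multiplication by $\phi^*(1)$, and moreover $\phi=(\phi^*(1))^\perp$.

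The plan is to deduce this directly from the coalgebra-level statements already established in Section~\ref{sect:Prel::Hopf::comod}, using the self-duality identification $\wh\La\simeq\La^*$. First I would invoke Lemma~\ref{theo:comodule_hom}: since $\phi$ is a $\La$-comodule morphism, its dual $\phi^*\colon\La^*\lra\La^*$ is a $\La^*$-module morphism; under $\wh\La\simeq\La^*$ this says $\phi^*\colon\wh\La\lra\wh\La$ is a $\wh\La$-module morphism, i.e.\ $\phi^*(FG)=\phi^*(F)G$ for $F,G\in\wh\La$. Then I would apply the first part of the unnamed lemma (``for an algebra $A$, an $A$-module morphism $\psi$ is multiplication by $\psi(1_A)$''): taking $A=\wh\La$ (here one should note $1_{\wh\La}=1$, the image of the unit, which is the Schur function $s_\emptyset$), this immediately gives $\phi^*(F)=\phi^*(1)\cdot F$ for all $F$, which is the first assertion.

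For the second assertion $\phi=(\phi^*(1))^\perp$, I would use part~(2) of the same unnamed lemma, or equivalently argue by taking adjoints. By Lemma~\ref{theo:Fperp}(2), $(F^\perp)^*=(F\cdot)$ for any $F\in\wh\La$; applying this with $F=\phi^*(1)$ gives $((\phi^*(1))^\perp)^* = (\phi^*(1)\cdot) = \phi^*$ by the first part. Since on $\La$ (which is a graded $K$-module of finite type, with $\wh\La$ its graded dual in the appropriate sense) the double-dual pairing is nondegenerate enough that a linear map is determined by its adjoint—more precisely, $\La\hookrightarrow\La^{**}$ via $f\mapsto(F\mapsto(F,f))$ and two maps $\La\to\La$ with equal adjoints $\wh\La\to\wh\La$ agree—we conclude $\phi=(\phi^*(1))^\perp$. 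Alternatively, one can simply cite part~(2) of the unnamed lemma verbatim with $C=\La$: a $C$-comodule morphism $\phi$ satisfies $\phi=(\phi^*(1_{C^*}))^\perp$, and $1_{C^*}=1_{\wh\La}=1$.

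The only genuine subtlety—hardly an obstacle—is bookkeeping about which dual is meant: $\La^*$ in the abstract sense versus $\wh\La$, and $\La^o$ versus $\La$, together with the fact that $\wh\La$ is an algebra but not a coalgebra (noted in the text), so one must make sure the ``module morphism $\Rightarrow$ multiplication by image of $1$'' lemma is applied on the algebra side $\wh\La$, not the (nonexistent) coalgebra side. Once the identifications from the self-duality of $\La$ and the Hall pairing are spelled out, the proof is a one-line citation of Lemma~\ref{theo:comodule_hom} and the two preceding lemmas, exactly parallel to how $f^\perp$ was characterized for a general coalgebra $C$.
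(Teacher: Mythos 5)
Your proposal is correct and is exactly what the paper intends: the lemma is stated with the remark ``By arguments in Section \ref{sect:Prel::Hopf::comod} we have the following lemmas,'' i.e.\ it is deduced by specializing Lemma \ref{theo:comodule_hom} and the unnamed lemma on module/comodule morphisms to $C=\La$, $C^*\simeq\wh\La$, just as you do. Your extra care about which dual is meant and about $\wh\La$ failing to be a coalgebra is a reasonable elaboration of the same argument, not a different route.
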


\subsubsection{Stable Grothendieck polynomials}

In \cite[Theorem 3.1]{MR1946917}
Buch gave a combinatorial description of
the {\em stable Grothendieck polynomial} $G_\la$
as a generating function of so-called {\em set-valued tableaux}.
We do not review the detail here
and just recall some of its properties:
$G_\la\in\wh\La$ (although $G_\la\notin\La$ if $\la\neq\emptyset$),
$G_\la$ is an infinite linear combination of $\{s_\mu\}_{\mu\in\PP}$
whose lowest degree component is $s_\la$.
Hence
$\wh\La=\prod_{\la\in\PP}K G_\la$,
i.e.\,every element in $\wh\La$ is uniquely written as an infinite linear combination of $G_\la$.
Moreover
the span
$\bigoplus_\la K G_\la$ ($\subset\wh\La$) is a bialgebra,
in particular the expansion of 
the product
$G_\mu G_\nu = \sum_{\la} c^{\la}_{\mu\nu} G_\la$ and
the coproduct
$\Delta(G_\la) = \sum_{\mu,\nu} d^{\la}_{\mu\nu} G_\mu\otimes G_\nu$
are finite.

\subsubsection{Dual stable Grothendieck polynomials}

Next we recall the dual stable Grothendieck polynomial $g_{\la/\mu}$.
For a skew shape $\la/\mu$,
a \textit{reverse plane partition} of shape $\la/\mu$
is a filling of the boxes in $\la/\mu$ with positive integers such that
the numbers are
weakly increasing
in every row and column.

\begin{defi}[{\cite{MR2377012}}]\label{defi:gla}
	For a skew shape $\la/\mu$,
	the {\em dual stable Grothendieck polynomial} $g_{\la/\mu}$ is defined by
	\begin{equation}\label{eq:gdef}
		g_{\la/\mu} = \sum_{T} x^T,
	\end{equation}
	summed over reverse plane partitions $T$ of shape $\la/\mu$,
	where $x^T=\prod_{i} x_i^{T(i)}$
	where $T(i)$ is the number of columns of $T$ that contain $i$.
\end{defi}
	When $\mu=\emptyset$ we write $g_\la=g_{\la/\emptyset}$.
It is shown in \cite{MR2377012} that
$g_{\la/\mu}\in\La$ %is a symmetric function,
and $g_\la$ has 
the highest degree component $s_{\la}$
and forms a basis of $\La$
that is dual to $G_\la$ via the Hall inner product:
$(G_\la,g_\mu)=\delta_{\la\mu}$.
Hence the product (resp.\,coproduct) structure constants for $\{G_\la\}$
coincide with
the coproduct (resp.\,product) structure constants for $\{g_\la\}$:
$g_\mu g_\nu = \sum_{\la} d^{\la}_{\mu\nu} g_\la$ and
$\Delta(g_\la) = \sum_{\mu,\nu} c^{\la}_{\mu\nu} g_\mu\otimes g_\nu$.

After \cite{dualstablesum}, 
we denote by $I$ the linear map $\La\lra\La$ defined by
$I(g_\la)=\sum_{\mu\subset\la}g_\mu$.
In \cite{dualstablesum} it is shown that $I$ is a ring automorphism,
but we need not use it as we rediscover it in \eqref{eq:I=i*id}.
%
%\begin{rema}
	The inverse map
	is given by
	$I^{-1}(g_\la)=\sum_{\text{$\la/\mu$: rook strip}} (-1)^{|\la/\mu|} g_\mu$.
	Here $\la/\mu$ is called a \emph{rook strip} if any cell of $\la/\mu$ is removable corner of $\la$.
%\end{rema}

\begin{rema}\label{rema:Gg_geom}
	We recall geometric interpretations of $G_\la$ and $g_\la$.
	%%% Grassmannian
	Let 
	$\Gr(k,n)$ be the Grassmannian of $k$-dimensional subspaces of $\mathbb{C}^n$,
	$R=(n-k)^k$ the rectangle of shape $(n-k)\times k$,
	and $\mcO_\la$ ($\la\subset R$) the structure sheaves of Schubert varieties of $\Gr(k,n)$.
	The $K$-theory $\Kth$,
	the Grothendieck group of algebraic vector bundles on $\Gr(k,n)$,
	has a basis 
	$\{[\mcO_\la]\}_{\la\subset R}$, %form a basis of $\Kth$ 
	and the surjection
	$\bigoplus_{\la\in\PP} \Z G_\la \lra \Kth = \bigoplus_{\la\subset R} \Z [\mcO_\la]$
	that maps $G_\la$ to $[\mcO_\la]$
	(which is considered as $0$ if $\la\not\subset R$)
	is an algebra homomorphism \cite{MR1946917}.

	There is another basis of $\Kth$ consisting of the classes $[\mcI_\la]$
	of ideal sheaves of boundaries of Schubert varieties.
	In \cite[Section 8]{MR1946917} it is shown that
	the bases $\{[\mcO_\la]\}_{\la\subset R}$ and $\{[\mcI_\la]\}_{\la\subset R}$ relates to each other by
	$[\mcO_\la] = \sum_{\la\subset\mu\subset R} [\mcI_\mu]$
	and that they are dual:
	more precisely $([\mcO_\la],[\mcI_{\ti\mu}])=\de_{\la\mu}$ where
	$\ti\mu=(n-k-\mu_k,\cdots,n-k-\mu_1)$ is the rotated complement of $\mu\subset R$
	and the pairing $(\,,)$ is defined by
	%$\Kth\times\Kho\lra\Kho\lra K_*(*)=\Z\,;\,
	$(\alpha,\beta) = \rho_*(\alpha\otimes\beta)$ 
	where
	%$\rho\colon\Gr(k,n)\lra\{*\}$ 
	$\rho_*$
	is the pushforward to a point.
	
	%%% K-homology
	%
	The $K$-homology $\Kho$,
	the Grothendieck group of coherent sheaves,
	is naturally isomorphic to $\Kth$. 
	Lam and Pylyavskyy proved in \cite[Theorem 9.16]{MR2377012} that the surjection
	$\La=\bigoplus_{\la\in\PP} \Z g_\la \lra \Kho = \bigoplus_{\mu\subset R} \Z [\mcI_\mu]$
	that maps $g_\la$ to $[\mcI_{\ti\la}]$
	(which is considered as $0$ if $\la\subset R$)
	identifies the coproduct and product on $\La$ with
	the pushforwards of
	the diagonal embedding map
	%$\Delta\colon\Gr(k,n)\lra\Gr(k,n)\times\Gr(k,n)$
	and
	the direct sum map.
	%$\phi\colon\Gr(k_1,n_1)\times\Gr(k_2,n_2)\lra\Gr(k_1+k_2,n_1+n_2)\,;\,(V_1,V_2)\mapsto V_1\oplus V_2$.
	
	Since $\mu\subset\la\iff\tilde\mu\supset\tilde\la$, 
	under this identification we see that
	$\sum_{\mu\subset\la}g_\mu\in\La$ corresponds to $[\mcO_{\tilde\la}]\in\Kho$.
\end{rema}

\section{Group-like elements in $\wh\La$ and automorphisms on $\La$}\label{sect:algebraic}

An element $a$ of a coalgebra $(A,\Delta,\epsilon)$ is called \emph{group-like} if $\Delta(a)=a\otimes a$ and $\epsilon(a)=1$.
The set of group-like elements in a Hopf algebra forms a group;
%the axiom of Hopf algebras implies that 
if $a$ and $b$ are group-like then so are $ab$ and $S(a)=a^{-1}$.
If $A$ is a bialgebra and $a\in A$ is group-like
then the multiplication map $L_a\colon A\lra A\,;\,b\mapsto ab$ is a coalgebra morphism
since $\Delta\circ L_a(b) = \Delta(ab) = \Delta(a)\Delta(b) = (a\otimes a)\Delta(b) = (L_a\otimes L_a)\circ\Delta(b)$,
and hence the dual map
$L_a^*\colon A^*\lra A^*$ is an algebra morphism.

Although $\wh\La$ is not a coalgebra,
%(when $f\in\wh\La$ has unbounded degree, $f(x,y)$ may be unable to written as a finite sum of $f_1(x)f_2(y)$ for $f_1,f_2\in\wh\La$),
we shall also say $F\in\wh\La$ is \emph{group-like} if $F(x,y)=F(x)F(y)$ and 
its constant term $F(0)$ is $1$.
Again, here we mean
$F(x)=F(x_1,x_2,\cdots)$,
$F(y)=F(y_1,y_2,\cdots)$,
$F(x,y)=F(x_1,x_2,\cdots, y_1, y_2,\cdots)$ and
$F(0) = F(0,0,\cdots)$.
%$f(0)=1$.
%
Then these elements satisfy expected properties seen above:

\begin{lemm}
	For group-like elements $F,F'\in\wh\La$,
	\Enumerate{
		\item
			$FF'$ is group-like.
		\item
			$S(F)=F^{-1}$ is group-like.
			Here we extend the antipode 
			$S\colon\La\lra\La\,;\,s_\la\mapsto(-1)^{|\la|}s_{\la'}$ to $\wh\La\lra\wh\La$.
	}
\end{lemm}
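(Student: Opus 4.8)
The plan is to dispatch part (1) by a one-line substitution computation, and to prove part (2) by pushing the group-like condition down to the genuine Hopf algebra $\La$ one homogeneous degree at a time, since $\wh\La$ itself is not a coalgebra. Write $F=\sum_{n\ge 0}F_n$ and $F'=\sum_{n\ge 0}F'_n$ with $F_n,F'_n\in\La_n$. For part (1), plugging the combined alphabet into $FF'$ and using that $\wh\La$ is commutative gives
\[
(FF')(x,y)=F(x,y)F'(x,y)=\bigl(F(x)F(y)\bigr)\bigl(F'(x)F'(y)\bigr)=(FF')(x)\,(FF')(y),
\]
while the constant term of $FF'$ is $F(0)F'(0)=1$; hence $FF'$ is group-like.

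For part (2), the first move is to observe that expanding $F(x,y)$ and $F(x)F(y)$ in a completion of $\La(x)\otimes\La(y)$ and matching degree-$n$ components shows that ``$F$ is group-like'' is equivalent to $F_0=1$ together with $\Delta(F_n)=\sum_{i+j=n}F_i\otimes F_j$ for every $n$, where $\Delta$ is the coproduct of $\La$. Applying the antipode axiom $m\circ(S\otimes\id)\circ\Delta=u\circ\epsilon$ to $F_n\in\La$ and using $\epsilon(F_n)=F_n(0)=\de_{n0}$ then gives $\sum_{i+j=n}S(F_i)F_j=\de_{n0}\cdot 1$ for every $n$. Summing these identities over $n\ge 0$ inside $\wh\La$ — the left-hand side is by definition the product of $\sum_i S(F_i)=S(F)$ with $\sum_j F_j=F$ — yields $S(F)\cdot F=1$. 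In particular $F$ is a unit of $\wh\La$ with $F^{-1}=S(F)$.

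It then remains to check that $S(F)=F^{-1}$ is itself group-like. Applying the ring homomorphism ``evaluate at the combined alphabet'' to $F\cdot S(F)=1$ gives $F(x,y)\,S(F)(x,y)=1$; since $F(x,y)=F(x)F(y)$ and, separately, $F(x)S(F)(x)=1=F(y)S(F)(y)$, we get $S(F)(x,y)=S(F)(x)\,S(F)(y)$, and the constant term of $S(F)$ is $S(F_0)=S(1)=1$ because $S$ preserves degree. Thus $S(F)$ is group-like. I expect the only genuinely delicate point to be the first step of part (2): verifying carefully that the substitution identity $F(x,y)=F(x)F(y)$, which a priori lives in a completed tensor product, is equivalent to the family of honest identities $\Delta(F_n)=\sum_{i+j=n}F_i\otimes F_j$ in $\La\otimes\La$. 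Once that dictionary is in place, the rest is the familiar Hopf-algebraic statement that a group-like element is a unit with inverse its antipode, applied degree by degree and reassembled in $\wh\La$. (Alternatively one can obtain group-likeness of $S(F)$ directly from $\Delta\circ S=(S\otimes S)\circ\Delta$ on $\La$, which holds because $\La$ is commutative and cocommutative.)
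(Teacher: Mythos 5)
Your proof is correct, and it follows the same overall strategy as the paper --- translate the substitution identity $F(x,y)=F(x)F(y)$ into honest coproduct identities inside $\La\otimes\La$, invoke the antipode axiom there, and reassemble in $\wh\La$ --- but the implementation differs in two ways worth noting. First, you organize the dictionary by homogeneous degree, obtaining ``$F_0=1$ and $\Delta(F_n)=\sum_{i+j=n}F_i\otimes F_j$,'' whereas the paper expands $F=\sum_\la A_\la s_\la$ in the Schur basis and records the equivalent condition $A_\emptyset=1$, $A_\mu A_\nu=\sum_\la A_\la c^\la_{\mu\nu}$ via Littlewood--Richardson coefficients; these are the same statement packaged differently, and your degree-wise version is arguably cleaner and keeps the combinatorics out of sight. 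Second, and more substantively, the paper proves that $S(F)$ is group-like \emph{before} proving $F\cdot S(F)=1$, and does so by the symmetry $c^{\la}_{\mu\nu}=c^{\la'}_{\mu'\nu'}$ of the Littlewood--Richardson coefficients, which is specific to $\La$; you instead prove $S(F)\cdot F=1$ first and then deduce group-likeness of $S(F)$ either by uniqueness of inverses applied under the ring homomorphism $f\mapsto f(x,y)$, or from the cocommutativity identity $\Delta\circ S=(S\otimes S)\circ\Delta$. Both of your alternatives are valid and more structural: they would work verbatim in any (completed) commutative and cocommutative graded Hopf algebra of finite type, while the paper's argument trades that generality for an explicit computation in the Schur basis. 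Your flagged ``delicate point'' --- that the identity in the completed tensor product is equivalent to the family of identities $\Delta(F_n)=\sum_{i+j=n}F_i\otimes F_j$ in $\La\otimes\La$ --- is indeed the only place requiring care, and your bigraded-component matching handles it correctly.
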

\begin{proof}
	\noindent (1)
	By
	$FF'(x,y)=F(x,y)F'(x,y)=F(x)F(y)F'(x)F'(y)=FF'(x)\cdot FF'(y)$ and
	$FF'(0) = F(0)F'(0)=1$.
	
	\noindent (2)
	Write $F=\sum_{\la}A_\la s_\la$ with $A_\la\in K$ (possibly an infinite sum).
	
	Since
	$F(x,y) 
	= \sum_{\la} A_\la s_\la(x,y)
	= \sum_{\la} A_\la \sum_{\mu,\nu} c^{\la}_{\mu\nu} s_\mu(x)s_\nu(y)
	= \sum_{\mu,\nu} \big(\sum_{\la} A_\la c^{\la}_{\mu\nu}\big) s_\mu(x)s_\nu(y)$
	and
	$F(x)F(y) 
	= \big(\sum_{\mu} A_\mu s_\mu(x)\big) \big(\sum_{\nu} A_\nu s_\nu(y)\big)
	= \sum_{\mu,\nu} A_\mu A_\nu s_\mu(x)s_\nu(y)$,
	it follows that
	\begin{equation}\label{eq:grouplike_cond}
		\text{$F=\sum_{\la}A_\la s_\la$ is group-like}
		\iff
		A_\emptyset = 1 \text{ and }
		A_\mu A_\nu = \sum_{\la} A_\la c^{\la}_{\mu\nu} \text{ for } \forall \mu, \nu.
	\end{equation}
	
	Let $F':=S(F)=\sum A_\la (-1)^{|\la|}s_{\la'}$.
	Similarly we can see that
	$F'$ is group-like if and only if
	$A_\emptyset=1$ and 
	$A_\mu A_\nu = \sum_{\la} A_\la c^{\la'}_{\mu'\nu'}$ for any $\mu$, $\nu$.
	Since $c^{\la}_{\mu\nu}=c^{\la'}_{\mu'\nu'}$ it follows that
	$F'$ is group-like.
	
	Since $\Delta(s_\la) = \sum_{\mu,\nu} c^{\la}_{\mu\nu} s_\mu\otimes s_\nu$,
	by applying $m\circ(\id\otimes S)$ we have
	$\sum_{\mu,\nu} (-1)^{|\nu|} c^{\la}_{\mu\nu} s_\mu s_{\nu'} 
		= m\circ(\id\otimes S)\circ\Delta(s_\la)
		= u\circ\epsilon(s_\la) = \de_{\la,\emptyset}$.
	From this and \eqref{eq:grouplike_cond} we have
	$FF' 
		= \big(\sum_{\mu} A_\mu s_\mu\big) \big(\sum_\nu (-1)^{|\nu|} A_\nu s_{\nu'}\big)
		= \sum_{\mu,\nu} (-1)^{|\nu|} A_\mu A_\nu s_\mu s_{\nu'}
		= \sum_{\mu,\nu} \sum_{\la} (-1)^{|\nu|} c^{\la}_{\mu\nu} A_\la  s_\mu s_{\nu'}
		= \sum_{\la} A_\la \big(\sum_{\mu,\nu} (-1)^{|\nu|} c^{\la}_{\mu\nu} s_\mu s_{\nu'}\big)
		= \sum_{\la} A_\la \de_{\la,\emptyset}
		= A_\emptyset = 1$.
	Hence $F' = F^{-1}$.
\end{proof}

\begin{lemm}\label{theo:group-like}
	For $F\in\wh\La$,
	the followings are equivalent.
	\Enumerate{
		\item 
		$F\in\wh\La$ is group-like.
		\item 
		$(F,-)\colon\La\lra K$ is an algebra homomorphism.
		\item 
		$F^\perp\colon\La\lra\La$ is an algebra automorphism.
	}
\end{lemm}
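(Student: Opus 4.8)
The plan is to prove $(1)\iff(2)$ directly from \eqref{eq:grouplike_cond}, and then deduce $(2)\iff(3)$ from the two identities $F^\perp=((F,-)\otimes\id)\circ\Delta$ of \eqref{eq:Fperp} and $(F,-)=\epsilon\circ F^\perp$ of \eqref{eq:F-}, using that the coproduct $\Delta$ and the counit $\epsilon$ on $\La$ are algebra morphisms.

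For $(1)\iff(2)$, I would write $F=\sum_\la A_\la s_\la$ and expand. Since the Schur functions form an orthonormal basis and $s_\mu s_\nu=\sum_\la c^\la_{\mu\nu}s_\la$, one computes $(F,s_\mu s_\nu)=\sum_\la c^\la_{\mu\nu}A_\la$, $(F,s_\mu)(F,s_\nu)=A_\mu A_\nu$ and $(F,1)=A_\emptyset$; hence $(F,-)$ respects products and the unit exactly when $A_\emptyset=1$ and $A_\mu A_\nu=\sum_\la c^\la_{\mu\nu}A_\la$ for all $\mu,\nu$, which by \eqref{eq:grouplike_cond} is precisely the group-like condition on $F$.

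For $(2)\Rightarrow(3)$: by \eqref{eq:Fperp}, $F^\perp$ is the composite of $\Delta\colon\La\lra\La\otimes\La$ with $(F,-)\otimes\id\colon\La\otimes\La\lra K\otimes\La=\La$. The former is an algebra morphism by the bialgebra axioms, and the latter is an algebra morphism whenever $(F,-)$ is; so $F^\perp$ is an algebra endomorphism of $\La$. To upgrade this to an automorphism I would invoke $(1)$ (already available from $(2)$): $F$ is group-like, so by the preceding lemma $S(F)=F^{-1}$ is group-like and $F\cdot S(F)=S(F)\cdot F=1$. Applying Lemma \ref{theo:FG}(2) then gives $F^\perp\circ S(F)^\perp=S(F)^\perp\circ F^\perp=1^\perp$, and $1^\perp=(\epsilon\otimes\id)\circ\Delta=\id$ because $(1,-)=\epsilon$ and by the counit axiom; hence $F^\perp$ is invertible with inverse $S(F)^\perp$, i.e.\ an algebra automorphism. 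For $(3)\Rightarrow(2)$: by \eqref{eq:F-}, $(F,-)=\epsilon\circ F^\perp$ is the composite of the algebra morphism $F^\perp$ with the counit $\epsilon$, which is an algebra morphism by the bialgebra axioms; hence $(F,-)$ is an algebra homomorphism.

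The only genuinely delicate point I anticipate is that condition $(3)$ asks for an \emph{automorphism} rather than merely an endomorphism, so in $(2)\Rightarrow(3)$ one cannot stop once $F^\perp$ is shown to be an algebra morphism but must exhibit the two-sided inverse $S(F)^\perp$; this is exactly where the group structure of group-like elements and the multiplicativity $(FG)^\perp=G^\perp\circ F^\perp$ of Lemma \ref{theo:FG}(2) are needed. One should also bear in mind that $\wh\La$ is only an algebra and not a coalgebra, so throughout ``group-like'' is taken in the ad hoc sense of Section \ref{sect:algebraic}, and the characterization \eqref{eq:grouplike_cond} plays the role that a genuine comultiplication on $\wh\La$ would otherwise play.
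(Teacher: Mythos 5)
Your proposal is correct and follows essentially the same route as the paper: $(1)\iff(2)$ via the coefficient characterization \eqref{eq:grouplike_cond}, $(2)\Rightarrow(3)$ by writing $F^\perp=((F,-)\otimes\id)\circ\Delta$ as a composite of algebra morphisms and inverting via $S(F)^\perp$ using Lemma \ref{theo:FG}(2), and $(3)\Rightarrow(2)$ from $(F,-)=\epsilon\circ F^\perp$. The only difference is that you spell out $1^\perp=\id$ explicitly, which the paper leaves implicit.
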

\begin{proof}
	\noindent\underline{$(1)\iff(2)$}
	Again we write $F=\sum_{\la}A_\la s_\la$ with $A_\la\in K$.
	(2) is equivalent to 
	$(F,1)=1$ and
	$(F,s_\mu) (F,s_\nu) = (F,s_\mu s_\nu)$ for any $\mu$, $\nu$,
	which is equivalent to \eqref{eq:grouplike_cond}
	since
	$s_\mu s_\nu = \sum_{\la} c^{\la}_{\mu\nu} s_\la$.
	
	\noindent\underline{$(2)\implies(3)$}
	Since $(F,-)\colon\La\lra K$ and $\id_\La\colon\La\lra\La$ are algebra morphisms,
	$(F,-)\otimes\id_\La\colon\La\otimes\La\lra\La$ is an algebra morphism.
	Since $\Delta\colon\La\lra\La\otimes\La$ is an algebra morphism by the axiom of bialgebras,
	it follows that $F^\perp=((F,-)\otimes\id)\circ\Delta$ is an algebra morphism.
	
	By $S(F) = F^{-1}$ and Lemma \ref{theo:FG} (2) we have
	$S(F)^\perp = (F^\perp)^{-1}$.
	Hence $F^\perp$ is invertible.
	
	\noindent\underline{$(3)\implies(2)$}
	By \eqref{eq:F-} and the axiom of bialgebras that $\epsilon\colon\La\lra K$ is an algebra morphism,
	it follows that $(F,-)=\epsilon\circ F^\perp$ is an algebra morphism.
\end{proof}

\begin{rema}
	There is no group-like element in $\La$ except $1$
	since $f(x,y)=f(x)f(y)$ implies $\deg(f) = \deg(f) + \deg(f)$.
\end{rema}

\subsection{Group-like elements $H(t)$, $E(t)$}

There are well-known generating functions
\begin{equation*}
H(t) = \sum_{i\ge 0} t^i h_i,
\qquad
E(t) = \sum_{i\ge 0} t^i e_i
\end{equation*}
where $t\in K$.
Note that 
\begin{equation}\label{eq:HE}
	H(t)E(-t)=1.
\end{equation}

\begin{lemm}\label{theo:HE:group-like}
	The elements $H(t), E(t) \in \wh\La$ 
	are group-like.
\end{lemm}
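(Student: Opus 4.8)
The plan is to verify the two defining conditions of group-likeness directly from the generating-function identities. Recall that $H(t) = \sum_{i\ge 0} t^i h_i$, and that $H(t)$ has the well-known product expansion $H(t) = \prod_{j\ge 1}\frac{1}{1-tx_j}$ in the variables $x = (x_1, x_2, \dots)$. The constant term $H(t)(0) = H(t)(0,0,\dots)$ is obtained by setting all $x_j = 0$, giving $\prod_j 1 = 1$; equivalently, the degree-$0$ component of $\sum_i t^i h_i$ is $h_0 = 1$. So the counit condition $H(t)(0) = 1$ is immediate.

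The main point is the comultiplicativity condition $H(t)(x, y) = H(t)(x) H(t)(y)$. Using the product formula,
\begin{equation*}
	H(t)(x, y) = \prod_{j\ge 1}\frac{1}{1-tx_j}\cdot\prod_{j\ge 1}\frac{1}{1-ty_j} = H(t)(x)\, H(t)(y),
\end{equation*}
where the first equality just groups the factors indexed by the $x$-variables separately from those indexed by the $y$-variables. Alternatively, if one prefers to avoid invoking the product formula, one can use the classical identity $h_n(x, y) = \sum_{i+j=n} h_i(x) h_j(y)$; multiplying by $t^n = t^i t^j$ and summing over $n$ gives the same conclusion. For $E(t)$ the argument is completely parallel: $E(t) = \prod_{j\ge 1}(1 + tx_j)$, so the constant term is $1$, and $E(t)(x,y) = \prod_j(1+tx_j)\prod_j(1+ty_j) = E(t)(x) E(t)(y)$; equivalently one uses $e_n(x,y) = \sum_{i+j=n} e_i(x) e_j(y)$.

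Since there is really nothing subtle here, I do not anticipate a genuine obstacle — the proof is a one-line appeal to the standard multiplicative generating-function formulas for $h_i$ and $e_i$, or to the coproduct formulas for the complete and elementary symmetric functions (which are exactly $\Delta(h_n) = \sum_{i+j=n} h_i\otimes h_j$ and $\Delta(e_n) = \sum_{i+j=n} e_i\otimes e_j$ in the Hopf algebra $\La$). The only thing worth a word of care is that $H(t)$ and $E(t)$ genuinely live in $\wh\La$ rather than $\La$ (they have unbounded degree when $t\ne 0$), so "group-like" is meant in the extended sense defined just above in the paper, namely the condition stated in terms of the variable substitutions $F(x,y) = F(x)F(y)$ and $F(0)=1$ rather than in terms of an honest coproduct on $\wh\La$ (which does not exist). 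With that understood, both identities above are literally identities of formal power series, and the proof is complete.
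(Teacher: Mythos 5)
Your proof is correct and essentially the same as the paper's: the paper verifies $H(t)(x,y)=H(t)(x)H(t)(y)$ precisely via the coproduct identity $h_k(x,y)=\sum_{i+j=k}h_i(x)h_j(y)$, which is your second ("alternative") argument, and treats $E(t)$ as similar. Your explicit check of the constant-term condition $F(0)=1$ and the product-formula phrasing are harmless additions that do not change the substance.
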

\begin{proof}
	By
	$\Delta(h_k)=\sum_{i+j=k}h_i\otimes h_j$,
	we have
	$H(t)(x,y)
		= \sum_{k\ge 0} t^k h_k(x,y)
		= \sum_{k\ge 0} t^k \sum_{i+j=k} h_i(x) h_j(y)
		= \big(\sum_{i\ge 0} t^i h_i(x)\big) \big(\sum_{j\ge 0} t^j h_j(y)\big)
		= \big(H(t)(x)\big) \big(H(t)(y)\big)$.
	The proof for $E(t)$ is similar.
\end{proof}

Define an algebra homomorphism
\[
	\phi_t\colon\La\lra\La\,;\,f(x)\mapsto f(tx),
\]
where $x=(x_1,x_2,\dots)$ and $tx=(tx_1,tx_2,\dots)$.
Clearly $\phi_t$ extends to an algebra homomorphism on $\wh\La$.
If $t$ is invertible then $\phi_t\circ\phi_{t^{-1}}=\id$,
whence $\phi_t$ is an automorphism.
Since $(\phi_t(s_\la),s_\mu) = t^{|\la|} \delta_{\la\mu} = (s_\la, \phi_t(s_\mu))$,
the map $\phi_t$ is self-adjoint.
Note that 
\begin{equation}\label{eq:Ht}
H(t)=\phi_t(H(1))\quad\text{ and }\quad E(t)=\phi_t(E(1)).
\end{equation}

\subsection{corresponding algebra morphisms}

By Lemma \ref{theo:group-like} and \ref{theo:HE:group-like} we have
algebra homomorphisms
\[
(H(t),-)\colon\La\lra K%\,;\,f\mapsto (H(t),f),
\qquad\text{and}\qquad
(E(t),-)\colon\La\lra K,%\,;\,f\mapsto (E(t),f),
\]
and algebra automorphisms
\[
H(t)^\perp\colon\La\lra\La%\,;\,f\mapsto H(t)^\perp(f),
\qquad\text{and}\qquad
E(t)^\perp\colon\La\lra\La.%\,;\,f\mapsto E(t)^\perp(f),
\]

\begin{lemm}\label{theo:(Ht,-)}
	$(H(t),-) = (H(1),-)\circ\phi_t$ and
	$(E(t),-) = (E(1),-)\circ\phi_t$.
\end{lemm}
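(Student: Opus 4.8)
The plan is to reduce the statement to equation \eqref{eq:Ht} together with the self-adjointness of $\phi_t$. Recall that \eqref{eq:Ht} gives $H(t)=\phi_t(H(1))$ and $E(t)=\phi_t(E(1))$, and that $\phi_t$ was observed to be self-adjoint for the Hall inner product, as checked on the Schur basis via $(\phi_t(s_\la),s_\mu)=t^{|\la|}\delta_{\la\mu}=(s_\la,\phi_t(s_\mu))$. So the entire proof is: conjugate by $\phi_t$ and use that it is its own adjoint.

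First I would record that the self-adjointness identity $(\phi_t(F),f)=(F,\phi_t(f))$, stated for $F,f\in\La$, extends verbatim to $F\in\wh\La$ and $f\in\La$. Indeed, writing $F=\sum_\la A_\la s_\la$ (a possibly infinite sum) and expanding $f$ in the Schur basis, only finitely many pairs $(\la,\mu)$ contribute to either side of the desired equality, so it follows term by term from the case already verified on Schur functions. (Equivalently, both sides are continuous in $F$ for the product topology on $\wh\La$, and $\La$ is dense, so checking on $\La$ suffices.)

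Then, for an arbitrary $f\in\La$,
\[
(H(t),f)=(\phi_t(H(1)),f)=(H(1),\phi_t(f))=\big((H(1),-)\circ\phi_t\big)(f),
\]
where the first equality is \eqref{eq:Ht} and the second is the extended self-adjointness of $\phi_t$. This gives $(H(t),-)=(H(1),-)\circ\phi_t$. The argument for $E(t)$ is identical with $H$ replaced by $E$ throughout.

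I do not expect any real obstacle: the only point needing a sentence of justification is the extension of self-adjointness to the pairing $\wh\La\times\La$, and that is immediate because pairing a fixed element of $\La$ against $\wh\La$ detects only finitely many Schur components. Everything else is a one-line substitution.
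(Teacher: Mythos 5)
Your proof is correct and is essentially identical to the paper's: both apply $H(t)=\phi_t(H(1))$ and the self-adjointness of $\phi_t$ to get $(H(t),f)=(\phi_t(H(1)),f)=(H(1),\phi_t(f))$. Your extra remark justifying the extension of self-adjointness to the pairing $\wh\La\times\La$ is a welcome clarification of a step the paper leaves implicit, but it does not change the argument.
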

\begin{proof}
	Since $\phi_t$ is self-adjoint,
	for $f\in\La$
	it follows that
	$(H(t),f) = (\phi_t(H(1)),f) = (H(1),\phi_t(f))$ and
	similarly $(E(t),f) = (E(1),\phi_t(f))$.
\end{proof}

\begin{lemm}\label{theo:Ht_perp}
	$\phi_t\circ H(t)^\perp=H(1)^\perp\circ \phi_t$ and
	$\phi_t\circ E(t)^\perp=E(1)^\perp\circ \phi_t$.
	
	Hence
	$H(t)^\perp=\phi_t^{-1}\circ H(1)^\perp\circ \phi_t$ and
	$E(t)^\perp=\phi_t^{-1}\circ E(1)^\perp\circ \phi_t$
	when $t$ is invertible.
\end{lemm}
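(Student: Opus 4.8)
The plan is to establish the first identity $\phi_t\circ H(t)^\perp = H(1)^\perp\circ\phi_t$ by dualizing with respect to the Hall pairing; the identity for $E(t)$ is proved by the identical argument with $H$ replaced by $E$ throughout, and the two displayed consequences follow at once by composing with $\phi_t^{-1}$ on the appropriate side (using $\phi_t\circ\phi_{t^{-1}}=\id$ for invertible $t$). Since the Hall pairing identifies $\wh\La$ with $\La^*$ and is therefore non-degenerate on $\La$, it is enough to show
\[
(G,\ \phi_t(H(t)^\perp(f))) \;=\; (G,\ H(1)^\perp(\phi_t(f)))
\]
for every $G\in\wh\La$ and $f\in\La$.

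First I would rewrite the left-hand side: the self-adjointness of $\phi_t$ (the computation $(\phi_t(s_\la), s_\mu)=t^{|\la|}\delta_{\la\mu}$, interpreted coefficientwise, shows that the extension of $\phi_t$ to $\wh\La$ is the adjoint of $\phi_t\colon\La\to\La$) moves $\phi_t$ onto $G$, and then Lemma \ref{theo:Fperp}(2) converts $H(t)^\perp$ into multiplication by $H(t)$, giving $(G,\phi_t(H(t)^\perp(f))) = (\phi_t(G), H(t)^\perp(f)) = (H(t)\,\phi_t(G), f)$. For the right-hand side I would instead apply Lemma \ref{theo:Fperp}(2) first, then self-adjointness of $\phi_t$, then the fact (already noted when $\phi_t$ was introduced) that $\phi_t$ is an algebra homomorphism on $\wh\La$, and finally $H(t)=\phi_t(H(1))$ from \eqref{eq:Ht}: this yields $(G, H(1)^\perp(\phi_t(f))) = (H(1)\,G, \phi_t(f)) = (\phi_t(H(1)\,G), f) = (\phi_t(H(1))\,\phi_t(G), f) = (H(t)\,\phi_t(G), f)$. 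The two sides agree, which is what we wanted.

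There is no genuinely hard step; the argument is a short chase through adjoints, resting on three already-available facts: $\phi_t$ is self-adjoint, $\phi_t$ is multiplicative on $\wh\La$, and $H(t)=\phi_t(H(1))$ (resp. $E(t)=\phi_t(E(1))$). The only point deserving a moment's care is checking that the extension of $\phi_t$ to $\wh\La$ really is the adjoint of $\phi_t$ on $\La$, so that self-adjointness may legitimately be used against an element $G$ of $\wh\La$ of possibly unbounded degree; this is immediate since $\phi_t$ is diagonal in the Schur basis. (Alternatively one could argue directly, without passing to adjoints, by noting that $\phi_t$ is also a coalgebra morphism, $\Delta\circ\phi_t=(\phi_t\otimes\phi_t)\circ\Delta$, and expanding $(\phi_t(H(1)))^\perp=((\phi_t(H(1)),-)\otimes\id)\circ\Delta$ together with $(\phi_t(H(1)),-)=(H(1),-)\circ\phi_t$; composing with $\phi_t$ and commuting it past $\Delta$ again recovers $H(1)^\perp\circ\phi_t$.)
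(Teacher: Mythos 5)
Your proof is correct and is essentially the paper's own argument: the paper likewise establishes $\phi_t\circ(\phi_t(G))^\perp = G^\perp\circ\phi_t$ for general $G\in\wh\La$ by the same chain of adjoint manipulations (self-adjointness of $\phi_t$, Lemma \ref{theo:Fperp}(2), multiplicativity of $\phi_t$ on $\wh\La$), then specializes via \eqref{eq:Ht}. Your extra remark about why self-adjointness extends to elements of $\wh\La$ of unbounded degree is a reasonable point of care that the paper leaves implicit.
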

\begin{proof}
	Since $\phi_t$ is a self-adjoint algebra automorphism,
	for $F,G\in\wh\La$ and $f\in\La$ we have
	$(F, \phi_t\circ(\phi_t(G))^\perp(f)) 
	= (\phi_t(F), (\phi_t(G))^\perp(f)) 
	= (\phi_t(G)\phi_t(F), f) 
	= (\phi_t(GF), f) 
	= (G F, \phi_t(f)) 
	= (F, G^\perp\circ\phi_t(f))$,
%	= (F, \phi_t^{-1}\circ G^\perp\circ\phi_t(f))$,
	whence
	$\phi_t\circ(\phi_t(G))^\perp = G^\perp\circ\phi_t$.
	In particular
	$\phi_t\circ H(t)^\perp = H(1)^\perp\circ\phi_t$
	and
	$\phi_t\circ E(t)^\perp = E(1)^\perp\circ\phi_t$
	by \eqref{eq:Ht}.
\end{proof}

\begin{lemm}\label{theo:HE_Hall_perp}
	\Enumerate{
		\item 
			$(H(t),-)*(E(-t),-) = \epsilon$,
			where $\epsilon\colon\La\lra K$ is the counit.
		\item 
			$H(t)^\perp \circ E(-t)^\perp = \id_\La$.
	}
\end{lemm}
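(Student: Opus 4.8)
The plan is to derive both statements as immediate consequences of the relation $H(t)E(-t)=1$ in \eqref{eq:HE} together with the standard compatibility in Lemma \ref{theo:FG}, after recording two bookkeeping facts about the element $1\in\wh\La$: namely that $(1,-)=\epsilon$ and that $1^\perp=\id_\La$. The first holds because $1=s_\emptyset$ and $(s_\emptyset,s_\la)=\delta_{\emptyset\la}=\epsilon(s_\la)$; the second follows from $1^\perp=((1,-)\otimes\id)\circ\Delta=(\epsilon\otimes\id)\circ\Delta$, which is $\id_\La$ by the counit axiom of the coalgebra $\La$ (equivalently, apply \eqref{eq:Fperp}).

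For part (1), I would apply Lemma \ref{theo:FG}(1) with $F=H(t)$ and $G=E(-t)$ to get $(H(t)E(-t),-)=(H(t),-)*(E(-t),-)$. By \eqref{eq:HE} the left-hand side is $(1,-)$, which equals $\epsilon$ by the remark above, so $(H(t),-)*(E(-t),-)=\epsilon$. (One can sanity-check this directly: $\epsilon$ is the unit for the convolution product $*$ on $\Hom(\La,K)$, and Lemma \ref{theo:HE:group-like} together with Lemma \ref{theo:group-like} tells us $(H(t),-)$ and $(E(-t),-)$ are algebra homomorphisms, consistent with them being mutually $*$-inverse.)

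For part (2), I would similarly apply Lemma \ref{theo:FG}(2) with the same $F$ and $G$: $(H(t)E(-t))^\perp=E(-t)^\perp\circ H(t)^\perp=H(t)^\perp\circ E(-t)^\perp$. Again the left-hand side is $1^\perp=\id_\La$ by \eqref{eq:HE} and the observation above, giving $H(t)^\perp\circ E(-t)^\perp=\id_\La$. Alternatively, part (2) can be deduced from part (1) via adjointness: by Lemma \ref{theo:Fperp}(2), $E(-t)^\perp$ and $H(t)^\perp$ are the adjoints of $(E(-t)\cdot)$ and $(H(t)\cdot)$, whose composite is $(H(t)E(-t)\cdot)=(1\cdot)=\id_{\wh\La}$, and taking adjoints reverses the order to give $H(t)^\perp\circ E(-t)^\perp=\id_\La$.

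There is essentially no hard step here: the content is entirely packaged into \eqref{eq:HE} and Lemma \ref{theo:FG}, and the only thing requiring a moment's care is verifying the two identities $(1,-)=\epsilon$ and $1^\perp=\id$, both of which are routine unwindings of the definitions of the Hall pairing and the coproduct. If anything is a potential pitfall, it is keeping track of the sign/order convention so that it is $E(-t)$ (not $E(t)$) that pairs with $H(t)$, which is exactly what \eqref{eq:HE} dictates.
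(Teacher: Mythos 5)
Your proof is correct and takes essentially the same route as the paper's, which simply cites \eqref{eq:HE}, Lemma \ref{theo:FG}, and the fact that $\epsilon$ is the identity for the convolution product. Your explicit verifications that $(1,-)=\epsilon$ and $1^\perp=\id_\La$ just spell out what the paper leaves implicit.
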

\begin{proof}
	\noindent (1)
	By \eqref{eq:HE}, Lemma \ref{theo:FG}, and the fact that 
	the counit is the identity element with respect to the convolution product $*$.
%	
%	\noindent 
	(2)
	By \eqref{eq:HE} and Lemma \ref{theo:FG}.
\end{proof}

\section{Proof of $I=H(1)^\perp$ and $I^{-1}=E(-1)^\perp$}\label{sect:I}

%\subsection{map $i$}

Define a ring homomorphism $i\colon \La\lra K$ by 
the substitution
\[
i\colon f\mapsto f(1) = f(1,0,0,\dots).
\]
We shall show in Theorem \ref{theo:H(1)_overall}
that $i=(H(1),-)$ and $I=(i\otimes\id)\circ\Delta=H(1)^\perp$.
We start with the following observation.

\begin{prop}\label{theo:i}
	$i(g_{\la/\mu})=1$ for any skew shape $\la/\mu$,
	and in particular $i(g_{\la})=1$ for any $\la\in\PP$.
\end{prop}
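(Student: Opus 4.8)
The plan is to evaluate $i$ directly on the combinatorial formula \eqref{eq:gdef}. Applying $i$ to $g_{\la/\mu} = \sum_T x^T$ means setting $x_1 = 1$ and $x_j = 0$ for all $j \ge 2$. A monomial $x^T = \prod_i x_i^{T(i)}$ survives this specialization if and only if $T(i) = 0$ for every $i \ge 2$. Since $T(i)$ counts the columns of $T$ containing the value $i$, this occurs exactly when no box of $T$ holds an entry $\ge 2$, that is, when $T$ is the filling of $\la/\mu$ all of whose entries equal $1$.

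Next I would observe that this all-ones filling is indeed a reverse plane partition — any constant filling is weakly increasing along rows and columns — and that it is plainly the unique reverse plane partition with all entries equal to $1$. For this tableau $T$ one has $T(i) = 0$ for $i \ge 2$, so $x^T = x_1^{T(1)}$ is a power of $x_1$; hence $i(x^T) = 1^{T(1)} = 1$. Combining with the previous paragraph, $i(g_{\la/\mu})$ equals the single surviving term, which is $1$. The case $\mu = \emptyset$ is the stated special case, and the degenerate case $\la/\mu = \emptyset$ reads $i(1) = 1$, which is consistent.

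I do not expect any real obstacle: once the definition of $g_{\la/\mu}$ is unwound, the statement is a one-line specialization. The only points deserving a word of care are that constant fillings genuinely satisfy the reverse-plane-partition condition and that the empty skew shape is not an exception; note in particular that the precise value of $T(1)$ (the number of nonempty columns of $\la/\mu$, which may involve ``gaps'' for general skew shapes) is irrelevant, since any power of $x_1$ specializes to $1$.
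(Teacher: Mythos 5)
Your proof is correct and is essentially the paper's own argument: specialize the generating function \eqref{eq:gdef} at $x=(1,0,0,\dots)$, note that only the all-ones reverse plane partition survives, and that its monomial specializes to $1$. The extra care about $T(1)$ and the empty shape is fine but not needed beyond what the paper already says.
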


\begin{proof}
	By the definition \eqref{eq:gdef} of $g_{\la/\mu}$,
	$i(g_{\la/\mu})$ is the number of reverse plane partitions on $\la/\mu$
	filled with $1$.
	Clearly there is exactly one such filling.
\end{proof}

\begin{rema}\label{theo:i:hep}
%	By Proposition \ref{theo:i}
	It is straightforward to check that
	$i(h_k)=1$ for any $k\ge 0$,
	$i(e_k)=0$ for any $k\ge 2$, and
	$i(p_k)=1$ for any $k\ge 1$.
\end{rema}

Since $\{g_\la\}_\la$ is a basis of $\La$ and $\{G_\la\}_\la$ is their dual,
from Proposition \ref{theo:i} we have

\begin{equation}\label{eq:i=(G,-)}
	i=\Big(\sum_{\la\in\PP}G_\la, -\Big).
\end{equation}

Another corollary of the proposition above is
formulas on the structure constants in
$g_\mu g_\nu = \sum_\la d^{\la}_{\mu\nu} g_\la$ and
$g_{\la/\mu} = \sum_{\nu} c^{\la}_{\mu\nu} g_\nu$:
\begin{coro}\label{theo:cd}
	\Enumerate{
		\item
			For any $\mu,\nu\in\PP$ we have
			$\sum_\la d^{\la}_{\mu\nu} = 1$.
		\item
			For any $\la,\mu\in\PP$ we have
			$\sum_\nu c^{\la}_{\mu\nu} = 1$.
	}
\end{coro}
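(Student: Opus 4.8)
The plan is to deduce Corollary \ref{theo:cd} directly from Proposition \ref{theo:i} by applying the ring homomorphism $i$ to the two expansions and using that $i(g_{\la/\mu})=1$ for every skew shape (in particular for every straight shape $\la$ and for $g_{\la/\mu}$ itself).

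For part (1): apply $i$ to the identity $g_\mu g_\nu = \sum_\la d^\la_{\mu\nu} g_\la$. Since $i$ is a ring homomorphism, the left-hand side becomes $i(g_\mu)i(g_\nu) = 1 \cdot 1 = 1$ by Proposition \ref{theo:i}. The right-hand side becomes $\sum_\la d^\la_{\mu\nu} i(g_\la) = \sum_\la d^\la_{\mu\nu}$, again using $i(g_\la)=1$. Comparing gives $\sum_\la d^\la_{\mu\nu} = 1$. Note the sum is finite because the $g_\la$ form a basis and the product expansion is a finite sum (equivalently, by the bialgebra structure on $\bigoplus_\la K G_\la$), so there is no convergence subtlety.

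For part (2): apply $i$ to the identity $g_{\la/\mu} = \sum_\nu c^\la_{\mu\nu} g_\nu$. The left-hand side is $i(g_{\la/\mu}) = 1$ by Proposition \ref{theo:i} (here we use the full strength of the proposition, for skew shapes, not just straight ones). The right-hand side is $\sum_\nu c^\la_{\mu\nu} i(g_\nu) = \sum_\nu c^\la_{\mu\nu}$. Comparing gives $\sum_\nu c^\la_{\mu\nu} = 1$. The sum is finite since $c^\la_{\mu\nu}$ vanishes unless $\mu,\nu\subset\la$.

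There is essentially no obstacle here: the corollary is a near-immediate consequence of Proposition \ref{theo:i} together with the multiplicativity of $i$, and the only point to be careful about is noting that both sums in question are finite so that evaluation under $i$ commutes with the summation. One might optionally remark that part (1) also follows from part (2) upon specializing the skew expansion, or relate both to the statement that $(\sum_\la G_\la, -)$ is an algebra homomorphism via \eqref{eq:i=(G,-)}, but the direct verification above is the cleanest route.
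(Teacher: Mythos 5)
Your proof is correct and is exactly the argument the paper intends: the corollary is stated as an immediate consequence of Proposition \ref{theo:i}, obtained by applying the ring homomorphism $i$ to the expansions $g_\mu g_\nu=\sum_\la d^\la_{\mu\nu}g_\la$ and $g_{\la/\mu}=\sum_\nu c^\la_{\mu\nu}g_\nu$. Your added remarks on finiteness of the sums are a harmless bonus.
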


\subsection{Description of $I$ as a substitution}\label{sect:I:f(1,x)}

Next we give another description of the map 
$I\colon g_\la \mapsto \sum_{\mu\subset\la}g_\mu$.

For a skew shape $\la/\mu$ and 
a totally ordered set $X$ called {\em alphabets} (most commonly $\{1,2,3,\dots\}$),
we shall denote by $\RPP(\la/\mu,X)$ the set of reverse plane partition
of shape $\la/\mu$ where each box is filled with an element of $X$.
The expression \eqref{eq:gdef} of $g_{\la/\mu}$ as a generating function of reverse plane partitions
implies
\begin{equation}\label{eq:copro}
\Delta(g_{\la/\mu}) 
= \sum_{\mu\subset\nu\subset\la} g_{\la/\nu} \otimes g_{\nu/\mu},
\end{equation}
since we have a natural bijection between
$\RPP(\la/\mu, \{1,2,\cdots,1',2',\dots\})$ and $\bigsqcup_{\mu\subset\nu\subset\la} \RPP(\nu/\mu, \{1,2,\cdots\})\times\RPP(\la/\nu, \{1',2',\cdots\})$
where $1<2<\cdots<1'<2'<\cdots$.
Since 
$(i\otimes\id)\circ\Delta\colon 
%	\La\lra\La\otimes\La\lra\La\,;\,
f(x)\mapsto 
%	f(x,y)\mapsto 
f(1,x)$
where $f(x)=f(x_1,x_2,\dots)$ and $f(1,x)=f(1,x_1,x_2,\dots)$,
by applying $i\otimes\id$ to \eqref{eq:copro}
we have
\begin{equation}\label{eq:g1x}
g_{\la/\mu}(1,x)
= \sum_{\mu\subset\nu\subset\la} 
i(g_{\la/\nu}) g_{\nu/\mu}(x)
= \sum_{\mu\subset\nu\subset\la} g_{\nu/\mu}(x),
\end{equation}
where the last equation is by Proposition \ref{theo:i}.
Similarly, by applying $(\id\otimes i)\circ\Delta$ to \eqref{eq:copro} we have
\begin{equation}\label{eq:gx1}
%	g_{\la/\mu}(1,x)
g_{\la/\mu}(x,1)
= \sum_{\mu\subset\nu\subset\la} 
g_{\la/\nu}(x) i(g_{\nu/\mu})
= \sum_{\mu\subset\nu\subset\la} g_{\la/\nu}(x).
\end{equation}
Setting $\mu=\emptyset$ in \eqref{eq:g1x},
for any $\la\in\PP$ we have
\[
g_{\la}(1,x)
= \sum_{\nu\subset\la} g_{\nu}(x) \ \Big(=I(g_\la(x))\Big).
\]
Since $\{g_\la\}_\la$ form a basis of $\La$,
this implies
\begin{prop}\label{theo:I(f)}
	For any $f\in\La$ we have
	\begin{equation}\label{eq:I=i*id}
		I(f(x)) = f(1,x),
	\end{equation}
	or equivalently
	\begin{equation}\label{eq:I_i}
		I=(i\otimes\id)\circ\Delta. %= (u\circ i)*\id
	\end{equation}
\end{prop}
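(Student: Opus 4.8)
The plan is to observe that the proposition is now essentially immediate from the ingredients already assembled, and to organize it as a ``check on a basis'' argument. Both $I$ and $(i\otimes\id)\circ\Delta$ are $K$-linear maps $\La\lra\La$ (the latter being the composition of the coproduct with $i\otimes\id_\La$), so it suffices to verify $I(g_\la)=(i\otimes\id)(\Delta(g_\la))$ for every $\la\in\PP$, since $\{g_\la\}_{\la\in\PP}$ is a basis of $\La$.

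First I would pin down the map $(i\otimes\id)\circ\Delta$ concretely. By definition $\Delta$ sends $f(x)$ to $f(x,y)$, regarded inside $\La(x)\otimes\La(y)$, and $i\colon\La(x)\lra K$ is the evaluation $x_1=1$, $x_2=x_3=\cdots=0$; applying $i$ to the first tensor factor therefore produces $f(1,y)=f(1,y_1,y_2,\dots)$, which after renaming the second family of variables back to $x$ is exactly $f(1,x_1,x_2,\dots)$. Hence $(i\otimes\id)\circ\Delta$ is precisely the substitution $f(x)\mapsto f(1,x)$; this gives the equivalence of \eqref{eq:I=i*id} and \eqref{eq:I_i} and lets me work with whichever form is more convenient.

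Next I would compute $(i\otimes\id)(\Delta(g_\la))$ using the coproduct formula \eqref{eq:copro} specialized to $\mu=\emptyset$, namely $\Delta(g_\la)=\sum_{\nu\subset\la}g_{\la/\nu}\otimes g_\nu$, together with Proposition \ref{theo:i}, which gives $i(g_{\la/\nu})=1$ for every skew shape $\la/\nu$. This yields
\[
(i\otimes\id)(\Delta(g_\la))=\sum_{\nu\subset\la} i(g_{\la/\nu})\, g_\nu=\sum_{\nu\subset\la} g_\nu = I(g_\la),
\]
the last equality being the definition of $I$. By linearity this extends from the basis $\{g_\la\}$ to all of $\La$, proving \eqref{eq:I_i} and hence \eqref{eq:I=i*id}. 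Equivalently, one can simply read \eqref{eq:I=i*id} off the displayed identity $g_\la(1,x)=\sum_{\nu\subset\la}g_\nu(x)$ established just before the statement and then invoke linearity.

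As for the main obstacle: there is none of substance, since all the content sits in Proposition \ref{theo:i} and in the reverse-plane-partition bijection behind \eqref{eq:copro}, both already available. The only point deserving care is the bookkeeping in the second paragraph: one must make sure that ``$i$ applied to the first tensor factor of $f(x,y)$'' really is the substitution $x_1\mapsto 1$, $x_{\ge 2}\mapsto 0$ and does not touch the $y$ variables — this is just the statement that $\Delta$ lands in $\La(x)\otimes\La(y)$ and that $i\otimes\id$ respects that factorization.
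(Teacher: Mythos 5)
Your proof is correct and follows essentially the same route as the paper: both reduce to the basis $\{g_\la\}$, apply $i\otimes\id$ to the coproduct formula \eqref{eq:copro}, and invoke Proposition \ref{theo:i} to collapse $i(g_{\la/\nu})$ to $1$. The only cosmetic difference is that the paper carries out the computation for general skew shapes $\la/\mu$ (obtaining \eqref{eq:g1x}) and then sets $\mu=\emptyset$, whereas you specialize to $\mu=\emptyset$ from the start.
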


In particular \eqref{eq:I=i*id} recovers that $I\colon\La\lra\La$ is a ring homomorphism,
and the bijectivity follows from the fact that
the transition matrix between $g_\la$ and 
$I(g_\la)=\sum_{\mu\subset\la}g_\mu$
is unitriangular.

Moreover, \eqref{eq:g1x}, \eqref{eq:gx1} and \eqref{eq:I=i*id} imply
%\begin{prop}\label{theo:I_skew}
	that for any skew shape $\la/\mu$
	\begin{equation}\label{eq:I_skew}
	I(g_{\la/\mu}) 
	= \sum_{\mu\subset\nu\subset\la} g_{\nu/\mu}
	= \sum_{\mu\subset\nu\subset\la} g_{\la/\nu}.
	\end{equation}
%\end{prop}

Besides,
by \eqref{eq:Fperp}, \eqref{eq:I_i} and Corollary \ref{eq:i=(G,-)} we have
%\begin{coro}
\begin{equation}\label{eq:Iasperp}
I=\Big(\sum_{\la\in\PP} G_\la\Big)^\perp.
\end{equation}
%\end{coro}
%
By \eqref{eq:Iasperp} and Lemma \ref{theo:Fperp},
$I$ is a $\La$-comodule morphism.
Since $I$ is bijective %it is a $\La$-comodule automorphism.
we have 
%the following.
%
\begin{prop}\label{theo:copro}
	$I\colon\La\lra\La$ is a $\La$-comodule automorphism.
\end{prop}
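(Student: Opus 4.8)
The plan is to assemble the two facts that are already in place immediately before the statement. First, by \eqref{eq:Iasperp} we have $I = \bigl(\sum_{\la\in\PP} G_\la\bigr)^\perp$, so Lemma \ref{theo:Fperp}(1) gives directly that $I$ is a $\La$-comodule morphism, i.e.\ $\Delta\circ I = (I\otimes\id)\circ\Delta$. Second, $I$ is bijective: as noted after Proposition \ref{theo:I(f)}, the transition matrix sending $g_\la$ to $I(g_\la) = \sum_{\mu\subset\la} g_\mu$ is unitriangular for any linear refinement of the containment order on partitions, hence invertible. It therefore only remains to check that $I^{-1}$ is again a $\La$-comodule morphism, which is exactly what makes $I$ a $\La$-comodule automorphism.

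For that last step I would argue purely formally. Starting from $\Delta\circ I = (I\otimes\id)\circ\Delta$, composing on the right with $I^{-1}$ gives $\Delta = (I\otimes\id)\circ\Delta\circ I^{-1}$; then composing on the left with $I^{-1}\otimes\id$ and using $(I^{-1}\otimes\id)\circ(I\otimes\id) = (I^{-1}\circ I)\otimes\id = \id$ yields $(I^{-1}\otimes\id)\circ\Delta = \Delta\circ I^{-1}$, as required. This is just the general observation that the $K$-linear inverse of a bijective comodule endomorphism is automatically a comodule endomorphism, valid over any $K$.

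Alternatively, and essentially equivalently, one can observe that $F:=\sum_\la G_\la = H(1)$ is group-like, so by Lemma \ref{theo:group-like} the map $I = F^\perp$ is an algebra automorphism, and by Lemma \ref{theo:FG}(2) together with $S(F)=F^{-1}$ its inverse equals $S(F)^\perp$, which is again of the form $(\,\cdot\,)^\perp$ and hence a $\La$-comodule morphism by Lemma \ref{theo:Fperp}(1); this identifies $I^{-1}$ with $E(-1)^\perp$ as promised in the introduction. Either route is short.

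I do not expect a genuine obstacle: every ingredient—the identity $I = (\sum G_\la)^\perp$, the fact that any $F^\perp$ is a comodule morphism, and the bijectivity of $I$—is already available, and the only new content is the one-line fact about inverses. The single point requiring mild care is bookkeeping with the tensor factors: the comodule-morphism condition in Section \ref{sect:Prel::Hopf::comod} is stated with $\id$ in the second slot, and although cocommutativity of $\Delta$ on $\La$ would let one put it in the first slot instead, it is cleanest to keep that convention fixed throughout the computation above.
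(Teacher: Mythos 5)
Your proof is correct and follows the paper's own route: the paper likewise deduces that $I$ is a $\La$-comodule morphism from \eqref{eq:Iasperp} and Lemma \ref{theo:Fperp}, and then concludes the automorphism property from bijectivity. Your explicit check that $I^{-1}$ is again a comodule morphism is a detail the paper leaves implicit, but it is the right (and correctly executed) justification.
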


\begin{rema}
	\noindent (1)
	We can prove Proposition \ref{theo:copro} by directly showing
	the equality
	\begin{equation}\label{eq:DeltaI}
	\Delta\circ I = (I\otimes \mathrm{id}_{\La})\circ\Delta 
	= (\mathrm{id}_{\La}\otimes I)\circ\Delta.
	\end{equation}
	Indeed,
	by \eqref{eq:I=i*id}
	each side of \eqref{eq:DeltaI} maps
	$f(x)\in\La(x)$ to $f(1,x,y)\in\La(x)\otimes\La(y)$.
	
	\noindent (2)
	Since $(G_\mu,g_\nu)=\de_{\mu\nu}$ and 
	$\Delta(g_\la) = \sum_{\nu} g_{\nu}\otimes g_{\la/\nu}$
	(by \eqref{eq:copro}),
	we have $G_\mu^\perp(g_\la) = g_{\la/\mu}$.
	Here we consider $g_{\la/\mu}=0$ if $\mu\not\subset\la$.
	Since $F^\perp$ (for $F\in\wh\La$) commute each other,
	$I$ commutes with $G_\mu^\perp$.
	Hence, 
	by applying $G_\mu^\perp$ to the equation $I(g_\la)=\sum_{\nu\subset\la}g_\nu$
	we have
	\[
	I(g_{\la/\mu})
	= I(G_\mu^\perp(g_\la))
	= G_\mu^\perp(I(g_\la))
	= G_\mu^\perp\Big(\sum_{\nu\subset\la} g_\nu\Big)
	= \sum_{\nu\subset\la} G_\mu^\perp(g_\nu)
	= \sum_{\nu\subset\la} g_{\nu/\mu},
	\]
	re-proving the first equation in \eqref{eq:I_skew}.
	Similarly, by applying $I=\sum_{\nu}G_\nu^\perp$ to $g_\la$
	we get a special case of the second equation of \eqref{eq:I_skew}
	\[
		I(g_\la) = \sum_{\nu}G_\nu^\perp(g_\la) = \sum_{\nu} g_{\la/\nu}.
	\]
\end{rema}

\subsection{Dual map $I^*$ and proof of $I^*(1)=H(1)$}\label{sect:I:H1_perp}

By Lemma \ref{theo:cohom_mult} and Proposition \ref{theo:copro} 
\Itemize{
	\item
		$I^*$ is the multiplication by $I^*(1)$,
	\item
		$I=I^*(1)^\perp$,
}
and similarly
\Itemize{
	\item
		$(I^{-1})^*$ is the multiplication by $(I^{-1})^*(1)$,
	\item
		$I^{-1}=(I^{-1})^*(1)^\perp$.
}

Since 
$(G_\la,g_\mu)=\delta_{\la\mu}$,
the maps
$I\colon\La\lra\La\,;\,g_\la\mapsto \sum_{\mu\subset\la}g_\mu$ and
$I^{-1}\colon g_\la\mapsto \sum_{\text{$\la/\mu$: rook strip}} (-1)^{|\la/\mu|} g_\mu$ 
induce
the dual maps
\begin{align}
	I^*&\colon\wh\La\lra\wh\La\,;\,G_\la\mapsto \sum_{\la\subset\mu} G_\mu \label{eq:I*} \\
	(I^{-1})^*&\colon\wh\La\lra\wh\La\,;\,G_\la\mapsto\sum_{\text{$\mu/\la$: rook strip}} (-1)^{|\mu/\la|} G_\mu. \label{eq:I*-1}
\end{align}
By \eqref{eq:I*} we have $I^*(1)=\sum_{\mu\in\PP}G_\mu$.
Similarly $(I^{-1})^*(1) = 1-G_1$ by \eqref{eq:I*-1}.
In particular $(1-G_1)\sum_{\mu}G_\mu = 1$.
Since $G_{1}=e_1-e_2+e_3-\cdots$ it follows that $E(-1)=1-G_1$,
whence 
\begin{equation}\label{eq:I-1:detail}
	(I^{-1})^* = (E(-1)\cdot) \qquad\text{and}\qquad I^{-1} = E(-1)^\perp.
\end{equation}
Hence,
it follows from $H(1)E(-1)=1$ that 
\begin{equation}\label{eq:I:detail}
	I^* = (H(1)\cdot),
	\qquad
	\sum_{\la\in\PP}G_\la \,\big(= I^*(1)\,\big)\,= H(1),
	\qquad\text{and}\qquad
	I = H(1)^\perp.
\end{equation}

We have proved the following theorems so far:

\begin{theo}\label{theo:H(1)_overall}
	\Enumerate{
		\item 
			$H(1)=\sum_{i\ge 0} h_i = \sum_{\la\in\PP} G_\la$.
		\item \label{item:H(1):i}
			The ring homomorphism $i\colon\La\lra K$ defined by
			$i(f) = f(1,0,0,\dots)$ satisfies
			\Enumerate{
				\item
					$i = (H(1),-)$.
				\item 
					$i(g_{\la/\mu}) = 1$ for any skew shape $\la/\mu$.
			}
		\item 
			The linear map $I\colon\La\lra\La$ defined by
			$I(g_\la) = \sum_{\mu\subset\la} g_\mu$ 
			is a ring automorphism that satisfies
%			\Enumerate{
%				\item
					\[I = H(1)^\perp 
%						= (i\otimes\id)\circ\Delta
						= \sum_{\la\in\PP} G_\la^\perp
						= \sum_{i\ge 0} h_i^\perp
						= \big(f(x_1,x_2,\dots)\mapsto f(1,x_1,x_2,\dots)\big)
					\]
%				\item 
			and
			\[
					I(g_{\la/\mu})
						= \sum_{\mu\subset\nu\subset\la} g_{\nu/\mu}
						= \sum_{\mu\subset\nu\subset\la} g_{\la/\nu}
			\]
			for any skew shape $\la/\mu$.
			In particular for any $\la\in\PP$
			\[
				I(g_{\la})
				= \sum_{\nu\subset\la} g_{\nu}
				= \sum_{\nu\subset\la} g_{\la/\nu}.
			\]
%			}
	}
\end{theo}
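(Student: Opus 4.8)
This theorem repackages the results assembled over Sections~\ref{sect:I:f(1,x)} and~\ref{sect:I:H1_perp}, so my plan is to indicate which clause comes from where and to isolate the one step that is not pure Hopf\hyp algebraic bookkeeping. Part~(2b) is exactly Proposition~\ref{theo:i}. The substitution description $I(f)=f(1,x_1,x_2,\dots)$ and the skew identities in part~(3) (including the $\mu=\emptyset$ specialization) are Proposition~\ref{theo:I(f)}, i.e.\ \eqref{eq:I=i*id}, together with \eqref{eq:I_skew}. That $I$ is a ring automorphism follows either from \eqref{eq:I=i*id} (a substitution is a ring homomorphism, and unitriangularity of $g_\la\mapsto\sum_{\mu\subset\la}g_\mu$ gives bijectivity) or, once $I=H(1)^\perp$ is known, from Lemma~\ref{theo:group-like}. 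So the genuine content is the chain $H(1)=\sum_i h_i=\sum_\la G_\la$ in part~(1) together with $I=H(1)^\perp$; granting these, $i=(H(1),-)$ is immediate from \eqref{eq:i=(G,-)}.

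To pin down $H(1)=\sum_\la G_\la$ I would argue as follows. By \eqref{eq:Iasperp} we have $I=(\sum_\la G_\la)^\perp$, and by Proposition~\ref{theo:copro} the map $I$ is a $\La$-comodule automorphism, so Lemma~\ref{theo:cohom_mult} gives that $I^*$ is the multiplication operator $(I^*(1)\cdot)$ on $\wh\La$ with $I=(I^*(1))^\perp$, and dualizing $I(g_\la)=\sum_{\mu\subset\la}g_\mu$ against the basis $\{g_\nu\}$ (using $(G_\la,g_\nu)=\de_{\la\nu}$) yields $I^*(G_\la)=\sum_{\mu\supset\la}G_\mu$, i.e.\ \eqref{eq:I*}, hence $I^*(1)=\sum_\mu G_\mu$. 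Applying the same reasoning to the inverse comodule automorphism $I^{-1}$, its dual $(I^{-1})^*$ is multiplication by $(I^{-1})^*(1)$; dualizing the rook\hyp strip formula for $I^{-1}$ gives \eqref{eq:I*-1}, and reading off the $\la=\emptyset$ case (the only rook strips $\mu/\emptyset$ being $\emptyset$ and the single box) gives $(I^{-1})^*(1)=G_\emptyset-G_1=1-G_1$. Recognizing $1-G_1=\sum_{i\ge0}(-1)^ie_i=E(-1)$ (from $G_1=e_1-e_2+e_3-\cdots$) and noting that $I^*\circ(I^{-1})^*=(I^{-1}\circ I)^*=\id$, so that the product of these two multiplication operators satisfies $I^*(1)\cdot(1-G_1)=1$ in $\wh\La$, we conclude $I^*(1)=E(-1)^{-1}=H(1)$ by \eqref{eq:HE}. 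This simultaneously gives $\sum_\la G_\la=H(1)=\sum_i h_i$ and, via $I=(I^*(1))^\perp$, the identity $I=H(1)^\perp$.

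It remains to collect the displayed string of descriptions of $I$. From $H(1)=\sum_\la G_\la=\sum_i h_i$ and the $K$\hyp linearity of $F\mapsto F^\perp=(F\otimes\id)\circ\Delta$ we get $H(1)^\perp=\sum_\la G_\la^\perp=\sum_i h_i^\perp$; these infinite sums are legitimate because, for $g\in\La_n$, the degree bound on $\Delta(g)$ forces $G_\la^\perp(g)=0$ whenever $|\la|>n$ and $h_i^\perp(g)=0$ whenever $i>n$, so each sum is locally finite. Together with \eqref{eq:I=i*id} and \eqref{eq:I_skew} this yields every assertion of part~(3), and \eqref{eq:i=(G,-)} with $\sum_\la G_\la=H(1)$ yields part~(2a). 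The one step that is not formal manipulation of the Hopf/comodule structure (and of the adjunctions $(F^\perp)^*=(F\cdot)$ from Lemmas~\ref{theo:Fperp} and~\ref{theo:cohom_mult}) is the identification $\sum_\la G_\la=H(1)$; within that, the single combinatorial input is the explicit rook\hyp strip expansion of $I^{-1}$ on the $g$\hyp basis — equivalently, a direct proof that $(1-G_1)\sum_\mu G_\mu=1$, which could alternatively be obtained from the $K$\hyp theoretic Pieri rule for $G_1\cdot G_\mu$.
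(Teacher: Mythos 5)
Your proposal is correct and follows essentially the same route as the paper: you assemble Propositions~\ref{theo:i}, \ref{theo:I(f)}, \eqref{eq:I_skew}, \eqref{eq:Iasperp} and \ref{theo:copro}, invoke Lemma~\ref{theo:cohom_mult} to see that $I^*$ and $(I^{-1})^*$ are multiplication operators, dualize to get $I^*(1)=\sum_\mu G_\mu$ and $(I^{-1})^*(1)=1-G_1=E(-1)$, and then use $H(1)E(-1)=1$ to conclude $\sum_\mu G_\mu=H(1)$ and $I=H(1)^\perp$, exactly as in Section~\ref{sect:I:H1_perp}. Your added remark on the local finiteness of $\sum_\la G_\la^\perp$ and $\sum_i h_i^\perp$ is a sound (and welcome) justification of a point the paper leaves implicit.
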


\begin{theo}\label{theo:E1:overall}
	\Enumerate{
		\item 
			$E(-1) = \sum_{i\ge 0}(-1)^i e_i = 1-G_1$.
		\item 
			The map $I^{-1}$ satisfies
%			$I^{-1} = E(-1)^\perp$.
			\begin{equation*}%\label{eq:intro:E1:1}
				I^{-1} 
				= E(-1)^\perp
				= \id_\La - G_{1}^\perp
				= \sum_{i\ge 0}(-1)^i e_i^\perp.
%				= \Big(f(x_1,x_2,\cdots)\mapsto f(1,x_1,x_2,\cdots)\Big).
			\end{equation*}
	}
\end{theo}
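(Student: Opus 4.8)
The plan is to assemble the statement from facts already obtained in Section~\ref{sect:I:H1_perp}; no genuinely new computation is required.

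First, for part~(1): the equality $E(-1) = \sum_{i\ge 0}(-1)^i e_i$ is simply the definition of $E(t)$ evaluated at $t=-1$. For the identification with $1-G_1$ I would invoke the classical expansion $G_1 = e_1 - e_2 + e_3 - \cdots$ of the stable Grothendieck polynomial indexed by a single box --- this is exactly the fact already used to derive \eqref{eq:I-1:detail} (and which, in Buch's set-valued-tableau description, amounts to summing $(-1)^{|S|-1}$ times the squarefree monomial on $S$ over all finite nonempty $S\subseteq\mathbb{Z}_{>0}$). Then $\sum_{i\ge 0}(-1)^i e_i = 1 - (e_1 - e_2 + e_3 - \cdots) = 1 - G_1$.

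Next, for part~(2): by \eqref{eq:I-1:detail} we already know $I^{-1} = E(-1)^\perp$. Since $F\mapsto F^\perp$ is $K$-linear on $\wh\La$ and $1^\perp = \id_\La$, I would substitute the two forms of $E(-1)$ from part~(1): this yields $E(-1)^\perp = (1-G_1)^\perp = \id_\La - G_1^\perp$ on the one hand, and $E(-1)^\perp = \big(\sum_{i\ge 0}(-1)^i e_i\big)^\perp = \sum_{i\ge 0}(-1)^i e_i^\perp$ on the other, completing the chain of equalities in the theorem.

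The only step deserving a word of care is the passage through infinite sums in $\wh\La$, namely that $\big(\sum_{i\ge 0}(-1)^i e_i\big)^\perp$ equals $\sum_{i\ge 0}(-1)^i e_i^\perp$ and that the latter is a well-defined operator on $\La$. This holds because $e_i^\perp$ lowers degree by $i$: for a fixed $f\in\La$, which has bounded degree, all but finitely many $e_i^\perp(f)$ vanish, and more generally $(F,-)$ and $F^\perp$ are computed degree-by-degree on each homogeneous component, so linearity extends to the (locally finite) infinite sum. Beyond this bookkeeping there is no obstacle --- the theorem is a repackaging of \eqref{eq:I-1:detail} together with the classical value of $G_1$.
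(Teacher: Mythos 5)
Your proposal is correct and follows essentially the same route as the paper: the theorem is stated there as a summary of Section~\ref{sect:I:H1_perp}, where $I^{-1}=E(-1)^\perp$ is obtained exactly as in \eqref{eq:I-1:detail} via $(I^{-1})^*(1)=1-G_1$ and the classical expansion $G_1=e_1-e_2+e_3-\cdots$. Your extra remark on the local finiteness of $\sum_{i\ge 0}(-1)^i e_i^\perp$ is a harmless (and correct) bit of bookkeeping that the paper leaves implicit.
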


We take a closer look at $(E(-1),-)$ and $E(-1)^\perp$
in Section \ref{sect:E(-1)}.
In Example \ref{exam:I_skew}
we see an example for the identity \eqref{eq:I_skew}.

\begin{rema}
	By \eqref{eq:I*}, \eqref{eq:I*-1}, \eqref{eq:I-1:detail} and \eqref{eq:I:detail}
	we rediscover the formulas
	\begin{align}
	\bigg(I^*(G_\la)=\bigg)\quad
	\big(\sum_{\mu\in\PP}G_\mu\big) G_\la
	&= \sum_{\la\subset\mu} G_\mu
	\label{eq:I*(1)} \\
	\bigg((I^{-1})^*(G_\la)=\bigg)\quad
	(1 - G_{1}) G_\la
	&= \sum_{\text{$\mu/\la$: rook strip}} (-1)^{|\mu/\la|} G_\mu,
	\label{eq:D*(1)}
	\end{align}
	which appeared in \cite[Section 8]{MR1946917}.
	These two are generalized in \eqref{eq:Ht:G} and \eqref{eq:Et:G}.
\end{rema}

\subsection{Description of $(E(-1),-)$ and $I^{-1} = E(-1)^\perp$}
\label{sect:E(-1)}

\begin{prop}\label{theo:(E1,-)_detail}
	The ring homomorphism $(E(-1),-)\colon\La\lra K$ satisfies
%	\Enumerate{
%		\item \label{item:(E1,-):g_la/mu}
		$(E(-1),g_{\la/\mu}) = (-1)^{|\la/\mu|}$
		if $\la/\mu$ is a rook strip,
		and 
		$(E(-1),g_{\la/\mu}) = 0$ otherwise.

		In particular 
		$(E(-1),g_{\emptyset}) = 1$,
		$(E(-1),g_{(1)}) = -1$, and
		$(E(-1),g_{\la}) = 0$
		for any $\la\in\PP$ with $|\la|>1$.
%	}
\end{prop}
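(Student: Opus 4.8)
The plan is to deduce this from the identity $E(-1)^\perp=I^{-1}$ of Theorem~\ref{theo:E1:overall} and the explicit formula $I^{-1}(g_\la)=\sum_{\text{$\la/\nu$: rook strip}}(-1)^{|\la/\nu|}g_\nu$. By \eqref{eq:F-} we have $(E(-1),-)=\epsilon\circ E(-1)^\perp=\epsilon\circ I^{-1}$, so it suffices to evaluate $\epsilon\bigl(I^{-1}(g_{\la/\mu})\bigr)$. Using the same device as in the Remark after Theorem~\ref{theo:H(1)_overall} — namely $G_\mu^\perp(g_\la)=g_{\la/\mu}$ (with the convention $g_{\la/\mu}=0$ for $\mu\not\subset\la$) together with the commutativity of all operators $F^\perp$, $F\in\wh\La$ (Lemma~\ref{theo:FG}(2)) — gives
\[
I^{-1}(g_{\la/\mu})=G_\mu^\perp\bigl(I^{-1}(g_\la)\bigr)=\sum_{\text{$\la/\nu$: rook strip}}(-1)^{|\la/\nu|}g_{\nu/\mu}.
\]

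Now I would apply $\epsilon$, using $\epsilon(g_{\nu/\mu})=\de_{\nu\mu}$ (the empty skew shape has exactly one reverse plane partition, of empty weight, while a nonempty one contributes only monomials of positive degree). This yields $(E(-1),g_{\la/\mu})=\sum_{\text{$\la/\nu$: rook strip}}(-1)^{|\la/\nu|}\de_{\nu\mu}$, which equals $(-1)^{|\la/\mu|}$ exactly when $\la/\mu$ is a rook strip and $0$ otherwise; since being a rook strip presupposes $\mu\subset\la$, the two stated cases are exhaustive. Alternatively one may fuse the last two steps via $\epsilon\circ G_\mu^\perp=(G_\mu,-)$ (again \eqref{eq:F-}) and $(G_\mu,g_\nu)=\de_{\mu\nu}$. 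For the ``in particular'' clause I would specialize $\mu=\emptyset$: the straight shape $\la$ is a rook strip iff every cell of $\la$ is a removable corner of $\la$, which happens only for $\la=\emptyset$ (value $(-1)^0=1$) and $\la=(1)$ (value $(-1)^1=-1$), as for $|\la|>1$ the cell $(1,1)$ is never a removable corner.

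I do not anticipate a real obstacle here: the computation mirrors the Remark's proof of \eqref{eq:I_skew} with $I^{-1}$ in place of $I$, followed by an application of the counit. The only delicate points are bookkeeping — the convention $g_{\la/\mu}=0$ when $\mu\not\subset\la$, the exact value $\epsilon(g_{\nu/\mu})=\de_{\nu\mu}$, and the fact that the hypothesis ``$\la/\mu$ is a rook strip'' silently includes $\mu\subset\la$.
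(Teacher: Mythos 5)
Your proof is correct, but it takes a genuinely different route from the paper's. The paper proves the proposition directly from the convolution identity $(H(1),-)*(E(-1),-)=\epsilon$ (Lemma \ref{theo:HE_Hall_perp}): applying $(H(1),-)\otimes(E(-1),-)$ to \eqref{eq:copro} and using $(H(1),g_{\la/\nu})=1$ yields $\delta_{\la\mu}=\sum_{\mu\subset\nu\subset\la}(E(-1),g_{\nu/\mu})$, which identifies $(E(-1),g_{\nu/\mu})$ with the M\"obius function of Young's lattice, whose known value is $(-1)^{|\nu/\mu|}$ on rook strips and $0$ otherwise. You instead route through $I^{-1}=E(-1)^\perp$ (Theorem \ref{theo:E1:overall}) and the explicit rook-strip formula for $I^{-1}(g_\la)$ stated in the preliminaries, transported to skew shapes via $G_\mu^\perp$ and the commutativity of the $F^\perp$ operators, and then evaluated with $\epsilon(g_{\nu/\mu})=\de_{\nu\mu}$. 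Every ingredient you invoke is established before this proposition, so there is no circularity; note only that the formula for $I^{-1}(g_\la)$ is itself merely asserted in the preliminaries (it is M\"obius inversion of $I(g_\la)=\sum_{\mu\subset\la}g_\mu$), so the two arguments ultimately rest on the same combinatorial input --- the M\"obius function of Young's lattice --- accessed from opposite ends. The paper's version is slightly more self-contained in that it re-derives the M\"obius recursion rather than quoting the inverse formula; yours has the merit of reusing the operator formalism already set up for \eqref{eq:I_skew} and of handling the skew case with no extra work. Your treatment of the delicate points (the convention $g_{\la/\mu}=0$ for $\mu\not\subset\la$, the value of $\epsilon$ on skew $g$'s, and the reduction of the ``in particular'' clause to $\la=\emptyset,(1)$) is accurate.
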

\begin{proof}
	Since $(H(1),-)*(E(-1),-)=\epsilon$ (Lemma \ref{theo:HE_Hall_perp}),
	applying $(H(1),-)\otimes(E(-1),-)$ to \eqref{eq:copro} we have
	for any skew shape $\la/\mu$ that
	\[
	\delta_{\la\mu} 
	= \epsilon(g_{\la/\mu})
	= \sum_{\mu\subset\nu\subset\la} (H(1),g_{\la/\nu}) (E(-1),g_{\nu/\mu})
	= \sum_{\mu\subset\nu\subset\la} (E(-1),g_{\nu/\mu}).
	\]
	Here the last equality is by
	Theorem \ref{theo:H(1)_overall} \ref{item:H(1):i}.
	Hence $(E(-1),g_{\nu/\mu})$ is equal to 
	the value of the M\"obius function $\mu_{\PP}(\mu,\nu)$,
	which is $(-1)^{|\nu/\mu|}$ if $\nu/\mu$ is a rook strip and $0$ otherwise.
\end{proof}

\begin{rema}\label{theo:E1:hep}
	It is easy to check that
	$(E(-1), e_i) = (-1)^i$ for $i\ge 0$, 
	$(E(-1), h_i) = 0$ for $i \ge 2$, and
	$(E(-1), p_i) = -1$ for $i\ge 1$,
	from Remark \ref{theo:i:hep} and the fact that
	$(H(1),-)*(E(-1),-)=\epsilon$.
\end{rema}

\begin{rema}
	Unlike Proposition \ref{theo:H(1)_overall} \ref{item:H(1):i},
	there is no $a_1,a_2,\dots\in\mathbb{R}$ such that
	$(E(-1),f) = f(a_1,a_2,\dots)$,
	since
	such numbers should satisfy $-1 = (E(-1),p_2) = a_1^2+a_2^2+\cdots$.
\end{rema}

Now we give a description of $E(-1)^\perp=I^{-1}$. %, proving Theorem \ref{theo:intro:E1}.
\begin{prop}
	The ring automorphism $E(-1)^\perp=I^{-1}\colon\La\lra\La$ satisfies
%	\Enumerate{
%		\item
		\[
		I^{-1}(g_{\la/\mu}) 
		= \sum_{\substack{\mu\subset\nu\subset\la \\ \la/\nu\text{: rook strip}}}
		(-1)^{|\la/\nu|} g_{\nu/\mu}
		= \sum_{\substack{\mu\subset\nu\subset\la \\ \nu/\mu\text{: rook strip}}}
		(-1)^{|\nu/\mu|} g_{\la/\mu}.
		\]
		In particular, when $\mu=\emptyset$ we have
		\begin{equation}\label{eq:Iinv:g}
			I^{-1}(g_{\la}) 
			= \sum_{\la/\nu\text{: rook strip}}
				(-1)^{|\la/\nu|} g_{\nu}
			= \begin{cases}
				g_{\la} - g_{\la/(1)} & \text{if $\la\neq\emptyset$}, \\
				1 & \text{if $\la=\emptyset$}.
			\end{cases}
		\end{equation}
%	}
\end{prop}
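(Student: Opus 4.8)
The plan is to deduce this directly from Proposition~\ref{theo:(E1,-)_detail}, the coproduct formula~\eqref{eq:copro}, and the two expressions for $F^\perp$ in~\eqref{eq:Fperp}, in the same spirit as the derivation of~\eqref{eq:I_skew} from Proposition~\ref{theo:i}. By Theorem~\ref{theo:E1:overall} we have $I^{-1}=E(-1)^\perp$, and~\eqref{eq:Fperp} presents this map as both $((E(-1),-)\otimes\id)\circ\Delta$ and $(\id\otimes(E(-1),-))\circ\Delta$. I would apply each presentation to $g_{\la/\mu}$ and expand using $\Delta(g_{\la/\mu})=\sum_{\mu\subset\nu\subset\la}g_{\la/\nu}\otimes g_{\nu/\mu}$, obtaining
\[
	I^{-1}(g_{\la/\mu})
	=\sum_{\mu\subset\nu\subset\la}(E(-1),g_{\la/\nu})\,g_{\nu/\mu}
	=\sum_{\mu\subset\nu\subset\la}g_{\la/\nu}\,(E(-1),g_{\nu/\mu}).
\]
Then I substitute the evaluation of Proposition~\ref{theo:(E1,-)_detail}, namely $(E(-1),g_{\rho/\sigma})=(-1)^{|\rho/\sigma|}$ when $\rho/\sigma$ is a rook strip and $0$ otherwise. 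In the first sum this kills every term except those with $\la/\nu$ a rook strip; in the second, every term except those with $\nu/\mu$ a rook strip. What remains is exactly the asserted pair of formulas, once one adopts the convention $g_{\la/\nu}=0$ unless $\nu\subset\la$ so that all the sums $\sum_{\mu\subset\nu\subset\la}$ are finite.

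For the case $\mu=\emptyset$, putting $\mu=\emptyset$ in the first of the two general formulas immediately yields $I^{-1}(g_\la)=\sum_{\la/\nu\text{: rook strip}}(-1)^{|\la/\nu|}g_\nu$, the first part of~\eqref{eq:Iinv:g}. For the closed form I would specialize instead the \emph{second} general formula: there $\nu$ ranges over partitions $\nu\subset\la$ for which $\nu=\nu/\emptyset$ is a rook strip, and a partition in which every cell is a removable corner is necessarily $\emptyset$ or $(1)$ (the cell $(1,1)$ fails to be a removable corner as soon as the partition has at least two cells). Hence the sum collapses to $g_{\la/\emptyset}-g_{\la/(1)}=g_\la-g_{\la/(1)}$ when $\la\neq\emptyset$, the $(1)$-term being present precisely because $(1)\subset\la$, and to $g_\emptyset=1$ when $\la=\emptyset$. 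Alternatively, this also follows from the presentation $I^{-1}=\id_\La-G_1^\perp$ recorded in Theorem~\ref{theo:E1:overall} together with $G_1^\perp(g_\la)=g_{\la/(1)}$, the $\mu=(1)$ instance of $G_\mu^\perp(g_\la)=g_{\la/\mu}$ from the remark following Proposition~\ref{theo:copro}.

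I do not expect a genuine obstacle here; the statement is a formal corollary of Proposition~\ref{theo:(E1,-)_detail} and the coalgebra structure of $\La$. The only point needing care is bookkeeping: matching the ``rook strip'' index sets produced by Proposition~\ref{theo:(E1,-)_detail} verbatim to those appearing in the statement, and applying the zero-convention for non-containment skew shapes consistently so that all the sums stay finite. As an internal check, equating the two specializations of $I^{-1}(g_\la)$ obtained above forces the identity $g_{\la/(1)}=\sum_{\emptyset\ne\la/\nu\text{: rook strip}}(-1)^{|\la/\nu|-1}g_\nu$, but this requires no separate argument, being nothing more than the first-versus-second comparison at $\mu=\emptyset$.
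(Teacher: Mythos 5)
Your argument is correct and is exactly the paper's proof: the paper derives the proposition by applying the two presentations of $E(-1)^\perp$ from \eqref{eq:Fperp} to the coproduct \eqref{eq:copro} and substituting the values from Proposition \ref{theo:(E1,-)_detail}, just as you do, and your collapse of the $\mu=\emptyset$ case to $g_\la-g_{\la/(1)}$ (equivalently via $I^{-1}=\id_\La-G_1^\perp$) is also the intended reading.
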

\begin{proof}
	By \eqref{eq:Fperp}, \eqref{eq:copro} and 
	Proposition \ref{theo:(E1,-)_detail}.
\end{proof}

\section{Description of $H(t)$, $(H(t),-)$ and $H(t)^\perp$}\label{sect:Ht}

Now we give a description for the map $(H(t),-)$.
Let $c(\la/\mu)$ denote the number of columns in the skew shape $\la/\mu$.
\begin{prop}\label{theo:(Ht,-)_detail}
	The algebra homomorphism $(H(t),-)\colon\La\lra K$ satisfies
	\begin{flalign*}
	&\mathrm{(1)} & (H(t), f) &= f(t,0,0,\cdots) & &\text{for any $f\in\La$}, \\
	&\mathrm{(2)} & (H(t),g_{\la/\mu}) &= t^{c(\la/\mu)}  & &\text{for any skew shape $\la/\mu$,}\\
	&\phantom{\mathrm{(b)}}\text{and in particular} & (H(t),g_{\la}) &= t^{c(\la)} & &\text{for any $\la\in\PP$.}
	\end{flalign*}
\end{prop}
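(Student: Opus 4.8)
The plan is to reduce both parts to the substitution description of $(H(1),-)$ already established. For part (1), I would combine Lemma~\ref{theo:(Ht,-)}, which gives $(H(t),-)=(H(1),-)\circ\phi_t$, with Theorem~\ref{theo:H(1)_overall}~\ref{item:H(1):i}, which identifies $(H(1),-)$ with the evaluation $i\colon f\mapsto f(1,0,0,\dots)$. Since $\phi_t$ is by definition the substitution $f(x)\mapsto f(tx)$, for any $f\in\La$ we obtain
\[
(H(t),f)=i\bigl(\phi_t(f)\bigr)=\phi_t(f)(1,0,0,\dots)=f(t\cdot 1,\,t\cdot 0,\,t\cdot 0,\dots)=f(t,0,0,\dots),
\]
which is exactly (1). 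Everything here is well defined because $f\in\La$ has bounded degree, so the specialization is a finite sum.

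For part (2), I would simply apply (1) to $f=g_{\la/\mu}$ and expand using the combinatorial definition~\eqref{eq:gdef}. Writing $g_{\la/\mu}=\sum_T x^T$ over reverse plane partitions $T$ of shape $\la/\mu$, with $x^T=\prod_i x_i^{T(i)}$ and $T(i)$ the number of columns of $T$ containing the entry $i$, the specialization $x_1=t$ and $x_i=0$ for $i\ge 2$ annihilates every monomial coming from a filling $T$ that uses some entry $\ge 2$; the only surviving term comes from the unique reverse plane partition all of whose entries equal $1$ (the same observation as in Proposition~\ref{theo:i}). For that filling every column contains $1$, so $T(1)=c(\la/\mu)$, and hence $(H(t),g_{\la/\mu})=g_{\la/\mu}(t,0,0,\dots)=t^{c(\la/\mu)}$. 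Setting $\mu=\emptyset$ yields the displayed special case.

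I do not expect a genuine obstacle: once part (1) is in hand, part (2) is pure bookkeeping with the generating function. The only minor point is the degenerate case of the empty skew shape, where $c(\la/\mu)=0$ and the empty reverse plane partition contributes $t^0=1$, so the formula still holds. An alternative route that avoids invoking (1) would be to compute $(H(t),g_{\la/\mu})=i\bigl(\phi_t(g_{\la/\mu})\bigr)$ directly, expanding $g_{\la/\mu}$ into homogeneous components and tracking the factor $t^{\deg}$ introduced by $\phi_t$; but routing through the clean substitution formula of part (1) is shorter and makes the role of $c(\la/\mu)$ transparent.
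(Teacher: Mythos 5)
Your argument is essentially identical to the paper's: part (1) is obtained by composing Lemma~\ref{theo:(Ht,-)} with Theorem~\ref{theo:H(1)_overall}~\ref{item:H(1):i}, and part (2) follows from the observation (already in Proposition~\ref{theo:i}) that the unique all-$1$ reverse plane partition of shape $\la/\mu$ contributes $x_1^{c(\la/\mu)}$. The proof is correct and matches the paper's route step for step.
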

\begin{proof}
	(1) follows from 
	Lemma \ref{theo:(Ht,-)} and 
	Theorem \ref{theo:H(1)_overall} \ref{item:H(1):i}.
	
	\noindent
	(2) As we argued in the proof of Proposition \ref{theo:i} (2),
	there is exactly one reverse plane partition
	of shape $\la/\mu$ filled with the alphabet $1$,
	whose weight is $x_1^{c(\la/\mu)}$.
	Hence $(H(t),g_{\la/\mu}) = g_{\la/\mu}(t,0,0,\cdots) = t^{c(\la/\mu)}$.
\end{proof}

\begin{rema}
	It is straightforward to check from $(H(t),f) = f(t,0,0,\dots)$ that
	$(H(t), h_i) = t^i$ for $i\ge 0$, 
	$(H(t), e_i) = 0$ for $i \ge 2$, and
	$(H(t), p_i) = t^i$ for $i\ge 1$.
\end{rema}

With the proposition above, Corollary \ref{theo:cd} is generalized as follows:
\begin{coro}
	\Enumerate{
		\item
		For any $\mu,\nu\in\PP$, we have
		$t^{c(\mu)+c(\nu)} = \sum_\la d^{\la}_{\mu\nu} t^{c(\la)}$.
		\item
		For any $\la,\mu\in\PP$, we have
		$t^{c(\la/\mu)} = \sum_\nu c^{\la}_{\mu\nu} t^{c(\nu)}$.
	}
\end{coro}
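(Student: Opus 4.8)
The plan is to deduce the corollary directly from Proposition~\ref{theo:(Ht,-)_detail}(2) by applying the algebra homomorphism $(H(t),-)$ to the two defining structure-constant expansions and invoking multiplicativity. No induction or combinatorics is needed beyond what Proposition~\ref{theo:(Ht,-)_detail} already supplies.

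\textbf{Part (1).} Start from the product expansion $g_\mu g_\nu = \sum_\la d^\la_{\mu\nu} g_\la$ and apply $(H(t),-)$. Since $(H(t),-)$ is an algebra homomorphism (Lemma~\ref{theo:group-like}, Lemma~\ref{theo:HE:group-like}), the left-hand side becomes $(H(t),g_\mu g_\nu) = (H(t),g_\mu)\,(H(t),g_\nu)$, which by Proposition~\ref{theo:(Ht,-)_detail}(2) equals $t^{c(\mu)}\cdot t^{c(\nu)} = t^{c(\mu)+c(\nu)}$. The right-hand side is linear, so it becomes $\sum_\la d^\la_{\mu\nu}\,(H(t),g_\la) = \sum_\la d^\la_{\mu\nu}\, t^{c(\la)}$. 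Equating the two gives the identity. (Implicitly this sum is finite: $g_\mu g_\nu$ is a polynomial in the $g_\la$, so only finitely many $d^\la_{\mu\nu}$ are nonzero, and applying the linear functional $(H(t),-)$ termwise is legitimate.)

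\textbf{Part (2).} Start from $g_{\la/\mu} = \sum_\nu c^\la_{\mu\nu} g_\nu$ and apply $(H(t),-)$. The left-hand side is $(H(t),g_{\la/\mu}) = t^{c(\la/\mu)}$ by Proposition~\ref{theo:(Ht,-)_detail}(2); the right-hand side is $\sum_\nu c^\la_{\mu\nu}\,(H(t),g_\nu) = \sum_\nu c^\la_{\mu\nu}\, t^{c(\nu)}$, again a finite sum since only $\nu\subset\la$ contribute. Equating yields the claim.

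\textbf{Main obstacle.} There is essentially no obstacle: both parts are one-line applications of multiplicativity and linearity of $(H(t),-)$ together with the explicit evaluation in Proposition~\ref{theo:(Ht,-)_detail}(2). The only point worth a word is that the specialization $t=1$ recovers Corollary~\ref{theo:cd}, consistent with $c(\la)=c(\la/\emptyset)$ and $(H(1),g_\la)=1$; and one could alternatively phrase the proof by writing $c(\la)=c(\la/\emptyset)$ uniformly so that (1) is the $\mu=\emptyset$, $\nu=\emptyset$ shadow of the general coproduct-structure statement, but the direct argument above is cleanest.

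\begin{proof}
	Both statements follow by applying the algebra homomorphism $(H(t),-)\colon\La\lra K$ to the relevant expansions.

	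\noindent (1)
	Since $(H(t),-)$ is an algebra homomorphism,
	$(H(t),g_\mu g_\nu) = (H(t),g_\mu)(H(t),g_\nu) = t^{c(\mu)}t^{c(\nu)} = t^{c(\mu)+c(\nu)}$
	by Proposition~\ref{theo:(Ht,-)_detail} (2).
	On the other hand, applying $(H(t),-)$ to $g_\mu g_\nu = \sum_\la d^{\la}_{\mu\nu} g_\la$ (a finite sum) and using linearity gives
	$(H(t),g_\mu g_\nu) = \sum_\la d^{\la}_{\mu\nu} (H(t),g_\la) = \sum_\la d^{\la}_{\mu\nu} t^{c(\la)}$.

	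\noindent (2)
	By Proposition~\ref{theo:(Ht,-)_detail} (2), $(H(t),g_{\la/\mu}) = t^{c(\la/\mu)}$.
	Applying $(H(t),-)$ to $g_{\la/\mu} = \sum_\nu c^{\la}_{\mu\nu} g_\nu$ (a finite sum, since $c^\la_{\mu\nu}=0$ unless $\nu\subset\la$) gives
	$t^{c(\la/\mu)} = \sum_\nu c^{\la}_{\mu\nu} (H(t),g_\nu) = \sum_\nu c^{\la}_{\mu\nu} t^{c(\nu)}$.
\end{proof}
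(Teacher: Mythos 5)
Your proof is correct and is exactly the argument the paper intends (the paper leaves it implicit, presenting the corollary as an immediate consequence of Proposition~\ref{theo:(Ht,-)_detail} via the algebra-homomorphism property of $(H(t),-)$). Nothing to add.
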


\begin{prop}\label{theo:Ht_perp:detail}
	The algebra automorphism $H(t)^\perp\colon\La\lra\La$ satisfies
	\begin{flalign*}
	&\mathrm{(1)} & H(t)^\perp(f(x_1,x_2,\dots)) 
	&= f(t,x_1,x_2,\dots)
	& &\text{for any $f\in\La$}, \\
	&\mathrm{(2)} & H(t)^\perp(g_{\la/\mu})
	&= \sum_{\mu\subset\nu\subset\la} t^{c(\la/\nu)} g_{\nu/\mu}
	= \sum_{\mu\subset\nu\subset\la} t^{c(\nu/\mu)} g_{\la/\nu} 
	& &\text{for any $\mu\subset\la$,} \\
	&\phantom{\mathrm{(2)}}\text{and in particular} &
	H(t)^\perp(g_{\la})
	&= \sum_{\nu\subset\la} t^{c(\la/\nu)} g_{\nu}
	= \sum_{\nu\subset\la} t^{c(\nu)} g_{\la/\nu}
	& &\text{for any $\la\in\PP$.}
	\end{flalign*}
\end{prop}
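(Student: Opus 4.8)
The plan is to establish Proposition \ref{theo:Ht_perp:detail} as the $t$-deformed analogue of Theorem \ref{theo:H(1)_overall}, feeding the two statements of Proposition \ref{theo:(Ht,-)_detail} into the identities \eqref{eq:Fperp} and \eqref{eq:copro} exactly as the facts $i=(H(1),-)$ and $i(g_{\la/\mu})=1$ were used in Section \ref{sect:I}.

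For part (1), I would start from $H(t)^\perp=((H(t),-)\otimes\id)\circ\Delta$ (equation \eqref{eq:Fperp}). Since $\Delta$ sends $f(x)\in\La(x)$ to $f(x,y)\in\La(x)\otimes\La(y)$ and, by Proposition \ref{theo:(Ht,-)_detail}(1), $(H(t),-)$ is the substitution $g\mapsto g(t,0,0,\dots)$, the composite sends $f(x)$ to $f(t,0,0,\dots,x_1,x_2,\dots)=f(t,x_1,x_2,\dots)$, where the trailing zeros may be dropped precisely because $f$ has bounded degree. (Alternatively, when $t$ is invertible one may deduce this from the conjugation formula $H(t)^\perp=\phi_t^{-1}\circ H(1)^\perp\circ\phi_t$ of Lemma \ref{theo:Ht_perp} together with $H(1)^\perp\colon f(x)\mapsto f(1,x)$ from Theorem \ref{theo:H(1)_overall}; but the direct route above needs no invertibility.)

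For part (2), I would apply the operator $(H(t),-)\otimes\id$ to the coproduct formula \eqref{eq:copro}, namely $\Delta(g_{\la/\mu})=\sum_{\mu\subset\nu\subset\la} g_{\la/\nu}\otimes g_{\nu/\mu}$. Because $H(t)^\perp=((H(t),-)\otimes\id)\circ\Delta$, the left side becomes $H(t)^\perp(g_{\la/\mu})$, while the right side becomes $\sum_{\mu\subset\nu\subset\la}(H(t),g_{\la/\nu})\,g_{\nu/\mu}=\sum_{\mu\subset\nu\subset\la} t^{c(\la/\nu)} g_{\nu/\mu}$ by Proposition \ref{theo:(Ht,-)_detail}(2). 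Applying instead $\id\otimes(H(t),-)$, which is legitimate since $F^\perp=(\id\otimes(F,-))\circ\Delta$ by cocommutativity (equation \eqref{eq:Fperp}), gives the second expression $\sum_{\mu\subset\nu\subset\la} t^{c(\nu/\mu)} g_{\la/\nu}$. Specializing $\mu=\emptyset$ then yields the displayed formulas for $H(t)^\perp(g_\la)$.

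I do not expect a genuine obstacle: the statement is a routine parametrized version of results already in hand, the only point needing a word of care being the dropping of the extra zero arguments in part (1), valid because elements of $\La$ have bounded degree (one could also note both sides of (1) are polynomial in $t$ and verify for invertible $t$ via the $\phi_t$-conjugation, then conclude by polynomiality). One could alternatively prove (2) directly by the same reverse-plane-partition splitting argument that produced \eqref{eq:copro} — the cells filled with the smallest letter form a subshape $\nu/\mu$ contributing the factor $t^{c(\nu/\mu)}$, and the remaining cells form a reverse plane partition of $\la/\nu$ — but deriving (2) from part (1) and \eqref{eq:copro} is shorter.
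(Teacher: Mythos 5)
Your proof is correct and is essentially identical to the paper's: part (1) from $H(t)^\perp=((H(t),-)\otimes\id)\circ\Delta$ together with Proposition \ref{theo:(Ht,-)_detail}(1), and part (2) by applying $(H(t),-)\otimes\id$ and $\id\otimes(H(t),-)$ to \eqref{eq:copro} and invoking Proposition \ref{theo:(Ht,-)_detail}(2). The extra remarks (dropping trailing zeros, the $\phi_t$-conjugation alternative) are fine but not needed.
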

\begin{proof}%[Proof of Theorem \ref{theo:intro:Ht} \ref{item:intro:Ht:perp}]
	Since
	$H(t)^\perp 
	= ((H(t),-)\otimes\id)\circ\Delta 
	= (\id\otimes(H(t),-))\circ\Delta$,
	(1)
	follows from
	Proposition \ref{theo:(Ht,-)_detail} 
	(1),
	and
	(2)
	is obtained by
	applying 
	$(H(t),-)\otimes\id)$ and
	$\id\otimes(H(t),-))$
	to \eqref{eq:copro} and using
	Proposition \ref{theo:(Ht,-)_detail} 
	(2).
\end{proof}

We also give the expansion of $H(t)$ using $G_\la$.

\begin{prop}\label{theo:Ht}
	The element $H(t)=\sum_{i\ge 0}t^i h_i\in\wh\La$ satisfies
	\begin{align*}
		H(t) G_\la &= \sum_{\la\subset\mu} t^{c(\mu/\la)} G_\mu.
	\end{align*}
	In particular, setting $\la=\emptyset$ we have
	\begin{equation}\label{eq:Ht:G}
		H(t) = \sum_{\mu\in\PP} t^{c(\mu)} G_\mu 
		\qquad\text{and hence}\qquad
		\Big(\sum_{\mu\in\PP} t^{c(\mu)} G_\mu\Big) G_\la
			= \sum_{\la\subset\mu} t^{c(\mu/\la)} G_\mu.
	\end{equation}
\end{prop}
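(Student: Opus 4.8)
The plan is to pin down the coefficients of $H(t)G_\la$ in the topological basis $\{G_\mu\}_{\mu\in\PP}$ of $\wh\La$ by pairing against the dual basis $\{g_\mu\}$ and invoking the description of $H(t)^\perp$ already obtained in Proposition \ref{theo:Ht_perp:detail}.

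First I would write $H(t)G_\la=\sum_{\mu\in\PP}a_\mu G_\mu$ with uniquely determined $a_\mu\in K$ (legitimate since $\wh\La=\prod_{\mu\in\PP}KG_\mu$), and observe that $a_\mu=(H(t)G_\la,g_\mu)$ because $(G_\mu,g_\nu)=\delta_{\mu\nu}$. Next, by the adjointness $(FG,f)=(G,F^\perp(f))$ of Lemma \ref{theo:Fperp}(2), applied with $F=H(t)$ and $G=G_\la\in\wh\La$, one has $(H(t)G_\la,g_\mu)=(G_\la,H(t)^\perp(g_\mu))$. Now Proposition \ref{theo:Ht_perp:detail}(2), applied with $\mu$ there in place of $\la$ and $\emptyset$ in place of the inner partition, gives $H(t)^\perp(g_\mu)=\sum_{\nu\subset\mu}t^{c(\mu/\nu)}g_\nu$. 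Pairing with $G_\la$ and using $(G_\la,g_\nu)=\delta_{\la\nu}$ kills every term except $\nu=\la$, so $a_\mu=t^{c(\mu/\la)}$ when $\la\subset\mu$ and $a_\mu=0$ otherwise; this is exactly the asserted identity $H(t)G_\la=\sum_{\la\subset\mu}t^{c(\mu/\la)}G_\mu$.

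Finally, specializing $\la=\emptyset$ and noting $c(\emptyset)=0$ yields $H(t)=H(t)\,G_\emptyset=\sum_{\mu\in\PP}t^{c(\mu)}G_\mu$, the first assertion of \eqref{eq:Ht:G}; substituting this expression for $H(t)$ back into the general formula gives the second assertion $\bigl(\sum_{\mu\in\PP}t^{c(\mu)}G_\mu\bigr)G_\la=\sum_{\la\subset\mu}t^{c(\mu/\la)}G_\mu$.

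I do not expect any genuine obstacle: the whole argument is a routine duality computation resting on Proposition \ref{theo:Ht_perp:detail}. The only points that call for a word of care are that the expansion of $H(t)G_\la$ in the $G_\mu$ is in general an infinite sum, so the manipulation takes place in $\wh\La$ and relies on the uniqueness of such expansions, and that $\sum_{\la\subset\mu}t^{c(\mu/\la)}G_\mu$ is indeed a well-defined element of $\wh\La$.
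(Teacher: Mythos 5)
Your proof is correct and is essentially the paper's own argument: both rest on Proposition \ref{theo:Ht_perp:detail}(2) together with the adjointness $(H(t)G,f)=(G,H(t)^\perp(f))$ of Lemma \ref{theo:Fperp}(2), the only cosmetic difference being that you extract the coefficients $a_\mu$ by pairing against the dual basis while the paper phrases the same computation as "$(H(t)^\perp)^*$ is multiplication by $H(t)$" via Lemma \ref{theo:cohom_mult}. Your cautionary remarks about infinite expansions in $\wh\La$ are apt but unproblematic, since for fixed $g_\mu$ only finitely many $G_\nu$ pair nontrivially with it.
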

\begin{proof}
	In Proposition \ref{theo:Ht_perp:detail} 
	(2)
	it is shown that 
	$H(t)^\perp(g_{\la})
	= \sum_{\mu\subset\la} t^{c(\la/\mu)} g_{\mu}$,
	from which
	we have
	$(H(t)^\perp)^*(G_\la) = \sum_{\la\subset\mu} t^{c(\mu/\la)} G_\mu$.
	By Lemma \ref{theo:Fperp} (2) and \ref{theo:cohom_mult} 
	we see that
	$(H(t)^\perp)^*$ is the multiplication by
	$(H(t)^\perp)^*(1)=H(t)$.
	Hence the proof is done.
\end{proof}

\section{Description of $E(t)$, $(E(t),-)$ and $E(t)^\perp$}\label{sect:Et}

\begin{prop}\label{theo:(Et,-)_detail}
	The ring homomorphism $(E(t),-)\colon\La\lra K$ satisfies
	\[
		(E(t),g_{\la/\mu}) = 
			\begin{cases}
%				(-1)^{|\la/\mu|} 
					t^{c(\la/\mu)} 
					(t+1)^{|\la/\mu|-c(\la/\mu)}
				& \text{if $\la/\mu$ is a vertical strip}, \\
				0 & \text{otherwise}
			\end{cases}
	\]
	for any skew shape $\la/\mu$.
	In particular,
	for any $\la\in\PP$,
	\[
		(E(t),g_{\la}) = 
			\begin{cases}
				1 & \text{if $\la=\emptyset$}, \\
%				(-1)^{n} 
				t (t+1)^{n-1}
				& \text{if $\la=(1^n)$ $(n\ge 1)$}, \\
				0 & \text{otherwise}.
			\end{cases}
	\]
%	}
\end{prop}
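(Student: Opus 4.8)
The plan is to follow the same route as the proof of Proposition~\ref{theo:(E1,-)_detail}, with the M\"obius function of Young's lattice replaced by a rank-one computation. Since $H(-t)E(t)=1$ (replace $t$ by $-t$ in~\eqref{eq:HE}), Lemma~\ref{theo:FG}~(1) and the fact that the counit is the identity for the convolution product give $(H(-t),-)*(E(t),-)=\epsilon$. Applying $(H(-t),-)\otimes(E(t),-)$ to the coproduct formula~\eqref{eq:copro} and inserting $(H(-t),g_{\la/\nu})=(-t)^{c(\la/\nu)}$ from Proposition~\ref{theo:(Ht,-)_detail}~(2), one obtains, for every skew shape $\la/\mu$,
\[
\de_{\la\mu}=\epsilon(g_{\la/\mu})=\sum_{\mu\subset\nu\subset\la}(-t)^{c(\la/\nu)}\,(E(t),g_{\nu/\mu}).
\]
The $\nu=\la$ term has coefficient $(-t)^{0}=1$, so by induction on $|\la/\mu|$ this identity determines the scalar $(E(t),g_{\la/\mu})$ uniquely in terms of the $(E(t),g_{\nu/\mu})$ with $\mu\subset\nu\subsetneq\la$. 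Hence it suffices to check that the values $v(\nu/\mu)$ asserted in the statement (so $v(\nu/\mu)=t^{c(\nu/\mu)}(t+1)^{|\nu/\mu|-c(\nu/\mu)}$ if $\nu/\mu$ is a vertical strip and $0$ otherwise) satisfy the same recursion, i.e. that
\[
\sum_{\mu\subset\nu\subset\la}(-t)^{c(\la/\nu)}\,v(\nu/\mu)=\de_{\la\mu}\qquad(\mu\subset\la);
\]
call this identity $(\star)$.

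To prove $(\star)$ I would pass to conjugate partitions. A partition $\nu$ gives a nonzero summand exactly when $\mu\subset\nu\subset\la$ and $\nu/\mu$ is a vertical strip, which in terms of conjugates amounts to $\mu'_j\le\nu'_j\le\min(\la'_j,\mu'_{j-1})$ for every column index $j$ (with the convention $\mu'_0=+\infty$); one checks that any integer sequence satisfying these inequalities is automatically weakly decreasing, so the columns decouple. Since $c(\la/\nu)=\#\{j:\la'_j>\nu'_j\}$, $c(\nu/\mu)=\#\{j:\nu'_j>\mu'_j\}$ and $|\nu/\mu|-c(\nu/\mu)=\sum_j\max(\nu'_j-\mu'_j-1,0)$, the summand of $(\star)$ factors over $j$ into
\[
(-t)^{[\la'_j>\nu'_j]}\,t^{[\nu'_j>\mu'_j]}\,(t+1)^{\max(\nu'_j-\mu'_j-1,\ 0)},
\]
so $(\star)$ becomes a product over $j$ of the sum of this expression over $\nu'_j\in[\mu'_j,\min(\la'_j,\mu'_{j-1})]$. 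Each one-column sum is a short geometric series: when $\la'_j=\mu'_j$ (empty column) it equals $1$, and otherwise — this is the calculation to carry out — it vanishes whenever $\min(\la'_j,\mu'_{j-1})=\la'_j$, while in the remaining case it equals $-t(t+1)^{\mu'_{j-1}-\mu'_j}$. To finish, observe that if $\la\neq\mu$ then the least $j_0$ with $\mu'_{j_0}<\la'_{j_0}$ satisfies $\mu'_{j_0-1}=\la'_{j_0-1}\ge\la'_{j_0}$, so $\min(\la'_{j_0},\mu'_{j_0-1})=\la'_{j_0}$ and the $j_0$-th factor is $0$; and if $\la=\mu$ every factor is $1$. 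Either way the product equals $\de_{\la\mu}$, which is $(\star)$. The "in particular" statement is then the case $\mu=\emptyset$, together with the observation that $\la/\emptyset$ is a vertical strip precisely when $\la=(1^n)$, in which case $c(\la)=1$ and $|\la|=n$.

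I expect the main obstacle to be the identity $(\star)$: specifically, setting up the conjugate-coordinate reformulation so that the sum factors column by column, and recognizing that the first nonempty column of $\la/\mu$ always contributes a vanishing factor when $\la\ne\mu$ (this is what forces the right-hand side $\de_{\la\mu}$ rather than some nonzero product). Once that structure is in place, the per-column evaluation is a routine finite geometric series. (An alternative that might streamline things is to use $(E(t),f)=(\omega f)(t,0,0,\dots)$, which reduces the claim to evaluating $\omega(g_{\la/\mu})$ at a single variable; but this replaces the combinatorial identity by the need for an explicit tableau description of $\omega(g_{\la/\mu})$, so it is not obviously shorter within the framework of this paper.)
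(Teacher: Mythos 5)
Your proof is correct. The overall reduction is the same as the paper's: both arguments start from the convolution identity $(H(\mp t),-)*(E(\pm t),-)=\epsilon$, insert the known values $(H(t),g_{\la/\nu})=t^{c(\la/\nu)}$ from Proposition~\ref{theo:(Ht,-)_detail}, and thereby reduce the proposition to a single combinatorial identity of M\"obius-inversion type over the interval $[\mu,\la]$ in Young's lattice (the paper phrases this as $i_tj_t=\delta$ in the incidence algebra $I(\PP)$, you phrase it as the recursion $(\star)$ plus a uniqueness-by-induction remark; these are the same thing). Where you genuinely diverge is in the verification of that identity. The paper isolates only the rightmost column of $\la$ (after discarding empty rows of $\la/\mu$), sets up the bijection \eqref{eq:vs_set} to split off that column, and shows the resulting inner sum $(X)$ vanishes by an induction on the column height. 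You instead pass to conjugate coordinates, observe that the vertical-strip and containment constraints decouple into independent intervals $\mu'_j\le\nu'_j\le\min(\la'_j,\mu'_{j-1})$, and factor the entire sum as a product of one-column geometric series, of which the factor at the first column where $\mu'_j<\la'_j$ is forced to vanish. I checked your per-column evaluations ($1$ for an empty column, $0$ when $\min(\la'_j,\mu'_{j-1})=\la'_j>\mu'_j$, and $-t(t+1)^{\mu'_{j-1}-\mu'_j}$ otherwise) and they are right; the paper's $(X)=0$ is essentially your vanishing factor for a single column. Your factorization makes the structure of the cancellation more transparent and avoids the bijection bookkeeping and the auxiliary induction on $q$, at the cost of having to justify that the column constraints really do decouple (which you do, via the automatic monotonicity of any sequence in those intervals). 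Either route is complete; yours is arguably the cleaner write-up of the same cancellation.
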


\begin{rema}
	By Lemma \ref{theo:(Ht,-)} and
	Proposition \ref{theo:E1:hep}
	it follows that
	$(E(t), e_i) = t^i$ for $i\ge 0$, 
	$(E(t), h_i) = 0$ for $i \ge 2$, and
	$(E(t), p_i) = (-1)^{i-1} t^i$ for $i\ge 1$.
\end{rema}

Before proving Proposition \ref{theo:(Et,-)_detail},
we give as its corollaries
descriptions for 
the element $E(t)$ 
and the map $E(t)^\perp$. 

\begin{prop}\label{theo:Et_perp:detail}
	The ring automorphism $E(t)^\perp\colon\La\lra\La$ satisfies
		\begin{align*}
			E(t)^\perp(g_{\la/\mu})
			&= \sum_{\substack{\mu\subset\nu\subset\la \\ \la/\nu\text{: vertical strip}}} 
%			(-1)^{|\la/\nu|} 
			t^{c(\la/\nu)} 
			(t+1)^{|\la/\nu|-c(\la/\nu)}
			g_{\nu/\mu} \\
			&= \sum_{\substack{\mu\subset\nu\subset\la \\ \nu/\mu\text{: vertical strip}}}
%			(-1)^{|\nu/\mu|} 
			t^{c(\nu/\mu)} 
			(t+1)^{|\nu/\mu|-c(\nu/\mu)}
			g_{\la/\nu}
		\end{align*}
		for any skew shape $\la/\mu$.
		In particular,
		for any $\la\in\PP$,
		\begin{align}
			E(t)^\perp(g_{\la})
			&= \sum_{\substack{\nu\subset\la \\ \la/\nu\text{: vertical strip}}} 
%			(-1)^{|\la/\nu|} 
			t^{c(\la/\nu)} 
			(t+1)^{|\la/\nu|-c(\la/\nu)}
			g_{\nu} \label{eq:Et_perp:g} \\
			&= 
			\begin{cases}
				g_\la 
				+ \sum_{k=1}^{l(\la)}
%				(-1)^{k} 
				t (t+1)^{k-1}
				g_{\la/(1^k)}
				& \text{if $\la\neq\emptyset$}, \\
				g_\emptyset & \text{if $\la=\emptyset$}.
			\end{cases} \notag
		\end{align}
%	}
\end{prop}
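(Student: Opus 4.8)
The plan is to run the same argument that establishes Proposition~\ref{theo:Ht_perp:detail}, with $(E(t),-)$ in place of $(H(t),-)$ and Proposition~\ref{theo:(Et,-)_detail} in place of Proposition~\ref{theo:(Ht,-)_detail}. By \eqref{eq:Fperp} we have $E(t)^\perp = ((E(t),-)\otimes\id)\circ\Delta = (\id\otimes(E(t),-))\circ\Delta$. First I would apply $(E(t),-)\otimes\id$ to the coproduct formula \eqref{eq:copro}, i.e.\ to $\Delta(g_{\la/\mu}) = \sum_{\mu\subset\nu\subset\la} g_{\la/\nu}\otimes g_{\nu/\mu}$, obtaining $E(t)^\perp(g_{\la/\mu}) = \sum_{\mu\subset\nu\subset\la}(E(t),g_{\la/\nu})\,g_{\nu/\mu}$. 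Plugging in the evaluation supplied by Proposition~\ref{theo:(Et,-)_detail}, namely $(E(t),g_{\la/\nu}) = t^{c(\la/\nu)}(t+1)^{|\la/\nu|-c(\la/\nu)}$ when $\la/\nu$ is a vertical strip and $0$ otherwise, restricts the sum to those $\nu$ with $\la/\nu$ a vertical strip and yields the first displayed identity. Applying instead $\id\otimes(E(t),-)$ to \eqref{eq:copro} and using the same evaluation on the right-hand tensor factor produces the second displayed identity.

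For the specialization $\mu=\emptyset$ I would read off the second expression. A straight shape $\nu$ is a vertical strip exactly when it is a single column, i.e.\ $\nu=(1^k)$ for some $k\ge 0$, and $(1^k)\subset\la$ iff $0\le k\le l(\la)$. When $k\ge 1$ we have $c((1^k))=1$ and $|(1^k)|=k$, so the coefficient $t^{c(\nu)}(t+1)^{|\nu|-c(\nu)}$ equals $t(t+1)^{k-1}$, contributing $t(t+1)^{k-1}g_{\la/(1^k)}$; when $k=0$ we get $\nu=\emptyset$, coefficient $1$, and $g_{\la/\emptyset}=g_\la$. Summing over $0\le k\le l(\la)$ gives $E(t)^\perp(g_\la)=g_\la+\sum_{k=1}^{l(\la)}t(t+1)^{k-1}g_{\la/(1^k)}$ for $\la\ne\emptyset$; for $\la=\emptyset$ only $k=0$ survives, giving $E(t)^\perp(g_\emptyset)=g_\emptyset$, which is also forced by $E(t)^\perp$ being a unital ring automorphism.

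Since Proposition~\ref{theo:(Et,-)_detail} is taken as an input, the argument is essentially bookkeeping and I do not anticipate a genuine obstacle; the only point needing a moment's care is the combinatorial translation in the $\mu=\emptyset$ case, i.e.\ recognizing that the straight-shape vertical strips contained in $\la$ are precisely the columns $(1^k)$ with $k\le l(\la)$, and that the empty shape must be included (with coefficient $1$), which accounts for the leading term $g_\la$. The genuinely nontrivial content is isolated in Proposition~\ref{theo:(Et,-)_detail} itself, which is why the paper states that evaluation formula first and presents the present proposition as an immediate corollary of it together with the coproduct formula \eqref{eq:copro}.
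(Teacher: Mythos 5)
Your proposal is correct and follows exactly the paper's route: the paper proves this proposition by the same argument as Proposition \ref{theo:Ht_perp:detail}(2), namely applying $(E(t),-)\otimes\id$ and $\id\otimes(E(t),-)$ to the coproduct formula \eqref{eq:copro} and invoking Proposition \ref{theo:(Et,-)_detail}, with the $\mu=\emptyset$ specialization read off just as you describe. Your identification of the straight-shape vertical strips as the columns $(1^k)$, $0\le k\le l(\la)$, is the right bookkeeping for the final case formula.
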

\begin{proof}
	Proved similarly to 
	Proposition \ref{theo:Ht_perp:detail} 
	(2),
	with Proposition \ref{theo:(Et,-)_detail} in hand.
\end{proof}

\begin{prop}\label{theo:Et}
	The element $E(t)=\sum_{i\ge 0}t^i e_i\in\wh\La$ satisfies
	\begin{align*}
	E(t) G_\la &= \sum_{\mu/\la\text{: vertical strip}} 
	t^{c(\mu/\la)} (t+1)^{|\mu/\la|-c(\mu/\la)} G_\mu.
	\end{align*}
	In particular, setting $\la=\emptyset$ we have
	\[
		E(t) = 1 + \sum_{n\ge 1} t(t+1)^{n-1} G_{(1^n)},
	\]
	and hence
	\begin{equation}\label{eq:Et:G}
		\Big(1 + \sum_{n\ge 1} t(t+1)^{n-1} G_{(1^n)}\Big) G_\la
		= \sum_{\mu/\la\text{: vertical strip}} 
		t^{c(\mu/\la)} (t+1)^{|\mu/\la|-c(\mu/\la)} G_\mu.
	\end{equation}
\end{prop}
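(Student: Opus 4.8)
The plan is to follow verbatim the strategy used for Proposition \ref{theo:Ht}, namely to dualize Proposition \ref{theo:Et_perp:detail}. First I would record that $E(t)\in\wh\La$ is group-like (Lemma \ref{theo:HE:group-like}), so that $E(t)^\perp$ is a $\La$-comodule morphism by Lemma \ref{theo:Fperp}~(1); hence by Lemma \ref{theo:cohom_mult} the dual map $(E(t)^\perp)^*\colon\wh\La\lra\wh\La$ is the multiplication by $(E(t)^\perp)^*(1)$, and by Lemma \ref{theo:Fperp}~(2) this dual map is exactly $(E(t)\cdot)$, so in particular $(E(t)^\perp)^*(1)=E(t)$. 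Thus it suffices to compute $(E(t)^\perp)^*$ on the basis $\{G_\la\}$ of $\wh\La$.

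For this I would use that $\{g_\la\}$ and $\{G_\la\}$ are dual with respect to the Hall pairing. Writing the special case $\mu=\emptyset$ of Proposition \ref{theo:Et_perp:detail}, i.e.\ \eqref{eq:Et_perp:g}, as $E(t)^\perp(g_\la)=\sum_\nu a_{\la\nu}\,g_\nu$ with $a_{\la\nu}=t^{c(\la/\nu)}(t+1)^{|\la/\nu|-c(\la/\nu)}$ when $\nu\subset\la$ and $\la/\nu$ is a vertical strip and $a_{\la\nu}=0$ otherwise, taking the adjoint relative to these dual bases gives $(E(t)^\perp)^*(G_\nu)=\sum_\la a_{\la\nu}\,G_\la$. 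Renaming the fixed index $\nu$ to $\la$ and the summation index $\la$ to $\mu$, this reads $E(t)G_\la=\sum_{\mu/\la\text{: vertical strip}} t^{c(\mu/\la)}(t+1)^{|\mu/\la|-c(\mu/\la)}\,G_\mu$, which is the first assertion.

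Finally I would specialize $\la=\emptyset$. The partitions $\mu$ for which $\mu=\mu/\emptyset$ is a vertical strip are precisely the single columns $\mu=(1^n)$ with $n\ge 0$; for $n\ge 1$ one has $c((1^n))=1$ and $|(1^n)|=n$, so the coefficient of $G_{(1^n)}$ is $t(t+1)^{n-1}$, while the $n=0$ term contributes $G_\emptyset=1$. This gives $E(t)=1+\sum_{n\ge 1}t(t+1)^{n-1}G_{(1^n)}$, and substituting this expression for $E(t)$ back into the product formula for $E(t)G_\la$ established above yields \eqref{eq:Et:G}.

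The argument is routine once Proposition \ref{theo:Et_perp:detail} is in hand; the only point requiring care is the transpose bookkeeping in passing from $E(t)^\perp$ acting on $\{g_\la\}$ to $(E(t)^\perp)^*$ acting on $\{G_\la\}$, where inner and outer partitions switch roles, so that a ``downward'' expansion involving $g_{\la/\nu}$ becomes an ``upward'' sum $\sum_{\mu\supset\la}$ for the $G$'s, together with the trivial remark that a vertical strip with empty inner shape is a column. The substantive combinatorial input, Proposition \ref{theo:(Et,-)_detail}, lies outside this proof, so here there is no real obstacle.
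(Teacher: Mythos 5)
Your proof is correct and follows essentially the same route as the paper: dualize \eqref{eq:Et_perp:g} via the pairing $(G_\la,g_\mu)=\delta_{\la\mu}$ to get $(E(t)^\perp)^*$ on the $G_\la$, identify $(E(t)^\perp)^*$ with multiplication by $(E(t)^\perp)^*(1)=E(t)$ using Lemmas \ref{theo:Fperp}~(2) and \ref{theo:cohom_mult}, and then specialize $\la=\emptyset$. (The appeal to group-likeness of $E(t)$ is harmless but not needed here, since Lemma \ref{theo:Fperp}~(1) makes $F^\perp$ a comodule morphism for arbitrary $F\in\wh\La$.)
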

\begin{proof}%[Proof of Theorem \ref{theo:intro:Et} \ref{item:intro:Et:elem}]
	By \eqref{eq:Et_perp:g} we have
	\[
		(E(t)^\perp)^*(G_{\nu})
			= \sum_{\la/\nu\text{: vertical strip}}
%			(-1)^{|\la/\nu|} 
			t^{c(\la/\nu)} 
			(t+1)^{|\la/\nu|-c(\la/\nu)}
			G_{\la}.
		\]
	By Lemma \ref{theo:Fperp} (2) and \ref{theo:cohom_mult} 
	we see that
	$(E(t)^\perp)^*$ is the multiplication by
	$(E(t)^\perp)^*(1)=E(t)$.
	Hence the proof is done.
\end{proof}

\subsection{Proof of Proposition \ref{theo:(Et,-)_detail}}

	We recall the \emph{incidence algebras}
	(see \cite[Chapter 3.6]{MR2868112} for details).
	Let $\mathrm{Int}(\PP) = \{(\mu,\la)\in\PP\times\PP\mid\mu\subset\la\}$,
	consisting of all comparable (ordered) pairs in $\PP$
	(or equivalently all skew shapes, 
	by identifying $(\mu,\la)$ with $\la/\mu$).
	The \emph{incidence algebra} $I(\PP)=I(\PP,K)$ is the algebra of all functions
	$f\colon\mathrm{Int}(\PP)\lra K$
	where multiplication is defined by the convolution
	\begin{equation}\label{eq:IP_fg}
		(fg)(\mu,\la) = \sum_{\mu\subset\nu\subset\la} f(\mu,\nu) g(\nu,\la).
	\end{equation}
	Then $I(\PP,K)$ is an associative algebra with two-sided identity
	$\delta := ((\mu,\la)\mapsto \delta_{\mu\la})$.

	A linear function $f\colon\La\lra K$ can be considered as an element of $I(\PP,K)$
	by setting $f(\mu,\la)=f(g_{\la/\mu})$.
	Then the convolution product $*$ on $\Hom(\La,K)$ coincides with
	the multiplication on $I(\PP)$ due to \eqref{eq:copro},
	i.e.\,this inclusion $\Hom(\La,K)\lra I(\PP)$ is as algebras.%
	\footnote{
		Since $\Delta(s_{\la/\mu}) = \sum_{\nu}s_{\la/\nu}\otimes s_{\nu/\mu}$,
		by setting $f(\mu,\la)=f(s_{\la/\mu})$
		we can obtain another algebra inclusion,
		although we do not need it.
	}
	Note that the counit $\epsilon\in\Hom(\La,K)$ is mapped to $\delta\in I(\PP)$.
	%since $\epsilon$ is the identity of $(\Hom(\La,\Z), *)$.

\begin{proof}[Proof of Proposition \ref{theo:(Et,-)_detail}]
	Define $i_t,j_t\in I(\PP)$ by
	\[
	i_t(\mu,\la) = t^{c(\la/\mu)}
	\]
	and
	\[
	j_t (\mu,\la) =
	\begin{cases}
	(-1)^{|\la/\mu|} 
	t^{c(\la/\mu)} 
	(t-1)^{|\la/\mu|-c(\la/\mu)}
	& \text{if $\la/\mu$ is a vertical strip}, \\
	0 & \text{otherwise}.
	\end{cases}
	\]
	
	By Proposition \ref{theo:(Ht,-)_detail} (2) %\ref{item:(Ht,-):g_la/mu}
	$(H(t),-)\in\Hom(\La,K)$ corresponds to $i_t\in I(\PP)$.
	Since $(H(t),-)*(E(-t),-)=\epsilon$,
	it suffices to show that $i_t j_t = \delta$
	in order to prove that $(E(-t),-)$ corresponds to $j_t$,
	whence 
	Proposition \ref{theo:(Et,-)_detail}
	follows
	by replacing $t$ with $-t$.
	
	By the definitions of $i_t$ and $j_t$ and \eqref{eq:IP_fg}
	\begin{equation}\label{eq:itjt}
	(i_t j_t)(\mu,\la)
	= \sum_{\substack{\mu\subset\nu\subset\la \\ \la/\nu\text{: vertical strip}}}
	t^{c(\nu/\mu)}
	(-1)^{|\la/\nu|} 
	t^{c(\la/\nu)} 
	(t-1)^{|\la/\nu|-c(\la/\nu)}.
	\end{equation}
	
	We need to show that the value of the right-hand side of \eqref{eq:itjt} is $\delta_{\mu\la}$.
	It is clear that if $\mu=\la$ then the value of \eqref{eq:itjt} is $1$.
	Assume $\mu\subsetneq\la$.
	Since the value of \eqref{eq:itjt} is invariant under 
	removal of empty rows in the skew shape $\la/\mu$,
	we can assume there is a box in the first row of $\la/\mu$,
	i.e.\,$\la_1 > \mu_1$.
	Let $p$ be the index of the rightmost column of $\la$,
	%and $q$ the number of boxes in the rightmost column of $\la$,
	i.e.\,$\la_1=p$.
	% and $\la'_p=q$.
	Note that $\la'_p>0=\mu'_p$.
	Let $\ti\la$ be the partition obtained by removing the $p$-th (rightmost) column of $\la$,
	i.e.\,$\ti\la_i = \min(\la_i,p-1)$.
	(Note: in the figure below and hereafter we display Young diagrams in the French notation.)
	
	\[
	\tikzitem[0.4]{
		\draw (5,0) -| (9,3) -| (7,5) -| (4,7) -| (0,4) -| (2,2) -| (5,0);
		\draw[thick, loosely dotted] (5,0) -| (0,4);
		
		\draw (9,0) to [out=70, in=-70] node [right]{$\la'_p$}  +(0,3) ;
		\draw [thick, dotted] (8.7,-0.2) -- +(0,-1) node [below]{$p=\la_1$};
		
		\node at (4.5,3.5) {$\la/\mu$};
		\node at (2.5,1) {$\mu$};
		
	}
	\]
	
	For a vertical strip $\la/\nu$ with $\mu\subset\nu$,
	by removing the $p$-th column of $\nu$ (let $\ti\nu$ denote the resulting shape)
	we get a vertical strip $\ti\la/\ti\nu$
	that satisfies $\mu\subset\ti\nu$
	and $\ti\nu'_{p-1}\ge \la'_p$.
	Conversely, for any vertical strip $\ti\la/\ka$ with $\mu\subset\ka$
	and $\ka'_{p-1}\ge\la'_p$
	and any integer $0\le i\le\la'_p$,
	by adding $i$ boxes in the $p$-th column of $\ka$
	we get the shape $\ka+(1^i)$, for which
	$\la/(\ka+(1^i))$ is a vertical strip.
	Therefore we have a bijection
	\begin{equation}\label{eq:vs_set}
	\{\nu\mid\mu\subset\nu\subset\la,\ \la/\nu\text{: vertical strip}\}
	\simeq
	\{\ka\mid\mu\subset\ka\subset\ti\la,
	\ \ti\la/\ka\text{: vertical strip},
	\ \ka'_{p-1}\ge\la'_p \}
	\times \{0,1,\dots,\la'_p\}
	\end{equation}
	in which $\nu$ corresponds to $(\ti\nu, \nu'_p)$,
	where $\ti\nu$ is $\nu$ with its $p$-th column removed.
	For $\nu$ in the left-hand side of \eqref{eq:vs_set},
	it is easy to see that
	\begin{align*}
	c(\nu/\mu) 	&= c(\ti\nu/\mu) + \DE{\nu'_p>0}, \\
	|\la/\nu| 	&= |\ti\la/\ti\nu| + \la'_p-\nu'_p, \\
	c(\la/\nu)	&= c(\ti\la/\ti\nu) + \DE{\nu'_p < \la'_p},
	\end{align*}
	where we use the notation 
	$\DE{P}=1$ if $P$ is true and $\DE{P}=0$ if $P$ is false
	for a condition $P$.

	Write simply 
	$A = 	
	\{\ka\mid\mu\subset\ka\subset\ti\la,
	\ \ti\la/\ka\text{: vertical strip},
	\ \ka'_{p-1}\ge\la'_p \}$.
	Substituting these to \eqref{eq:itjt},
	we have
	\begin{align*}
	\text{(RHS of \eqref{eq:itjt})}
	&= 	
	\sum_{\ka\in A}
	\sum_{i=0}^{\la'_p}
	t^{c(\ka/\mu) + \DE{i>0}}
	(-1)^{|\ti\la/\ka| + \la'_p-i} 
	t^{c(\ti\la/\ka) + \DE{i < \la'_p}}
	(t-1)^{|\ti\la/\ka| + \la'_p-i-(c(\ti\la/\ka) + \DE{i < \la'_p})} \\
	&= 	\sum_{\ka\in A}
	(-1)^{|\ti\la/\ka|} 
	t^{c(\ka/\mu) + c(\ti\la/\ka)} 
	(t-1)^{|\ti\la/\ka| - c(\ti\la/\ka)}
	\underbrace{
		\sum_{i=0}^{\la'_p}
		(-1)^{\la'_p-i} 
		t^{\DE{i > 0} + \DE{i < \la'_p}} 
		(t-1)^{\la'_p-i-\DE{i < \la'_p}}
	}_{(X)}.
	\end{align*}
	We shall show $(X)=0$.
	Letting $q=\la'_p$ ($>0$) and $j=q-i$ we rewrite $(X)$ as
	\begin{equation}\label{eq:X}
	(X) 
	= 	\sum_{j=0}^{q} 
	(-1)^j
	t^{\DE{j > 0} + \DE{j < q}} 
	(t-1)^{j-\DE{j > 0}}.
	\end{equation}
	It is easy to check $\eqref{eq:X}=0$ when $q=1$.
	When $q\ge 2$, by checking
	\[
	(-1)^q t (t-1)^{q-1}
	+ (-1)^{q-1} t^{2} (t-1)^{q-2}
	= (-1)^{q-1} t (t-1)^{q-2},
	\]
	we can carry induction on $q$ to obtain $\eqref{eq:X}=0$.
	
	Therefore we conclude 
	$\eqref{eq:itjt}=0$ if $\la/\mu\neq\emptyset$,
	finishing the proof of 
%	Theorem \ref{theo:intro:Et}\ref{item:intro:Et:Hall}.
	Proposition \ref{theo:(Et,-)_detail}.
\end{proof}

\section{Example}\label{sect:eg}

We display Young diagrams in the French notation.

\begin{exam}\label{exam:I_skew}
	Let $\la/\mu=(3,2,1)/(1)=\sk{3,2,1}{1}$.
	We shall verify \eqref{eq:I_skew} for this $\la/\mu$ by expanding each term into a linear combination of $\{g_\nu\}$.
	We can check
	\begin{equation}\label{eq:321/1}
	\dgs{3,2,1}{1} = \dg{3,2} + \dg{3,1,1} + \dg{2,2,1} - \dg{3,1} - \dg{2,2} - \dg{2,1,1} + \dg{2,1},
	\end{equation}
	using recursively the Pieri formula for skew dual stable Grothendieck polynomials \cite[Theorem 7.1]{1711.09544}
	\begin{equation}\label{eq:skewPieri}
	h_k g_{\mu/\nu} =
	\sum_{\substack{\la/\mu\text{: horizontal strip} \\ \nu/\eta\text{: vertical strip}}}
	(-1)^{k-|\la/\mu|}
	\binom{a(\la\para\mu) - a(\nu'\para\eta') - |\nu/\eta|}{k-|\la/\mu|-|\nu/\eta|}
	g_{\la/\eta},
	\end{equation}
	where 
	$a(\alpha\para\beta)$ is the number of $i\ge 1$ satisfying $\beta_i > \alpha_{i+1}$ and $\beta_{i}>\beta_{i+1}$, and
	the binomial coefficient $\binom{m}{n}$ is considered as $0$ when $n<0$.
	(Note: another way to check \eqref{eq:321/1} is to use \eqref{eq:Iinv:g}.)
	
	Applying $I$ to \eqref{eq:321/1} and using $I(g_\ka)=\sum_{\alpha\subset\ka}g_\alpha$, 
	we compute the first term of \eqref{eq:I_skew} as
	\begin{align}
	I(\dgs{3,2,1}{1})
	&= I(\dg{3,2} + \dg{3,1,1} + \dg{2,2,1} - \dg{3,1} - \dg{2,2} - \dg{2,1,1} + \dg{2,1}) \notag \\
	&= 
	\sum_{\ka\subset\yd{3,2}} g_{\ka} 
	+ \sum_{\ka\subset\yd{3,1,1}} g_{\ka} 
	+ \sum_{\ka\subset\yd{2,2,1}} g_{\ka}
	- \sum_{\ka\subset\yd{3,1}} g_{\ka} 
	- \sum_{\ka\subset\yd{2,2}} g_{\ka} 
	- \sum_{\ka\subset\yd{2,1,1}} g_{\ka}
	+ \sum_{\ka\subset\yd{2,1}} g_{\ka} \notag \\
	&=
	\sum_{\ka\in [\emptyset, \yd{3,2}]\cup[\emptyset, \yd{3,1,1}]\cup[\emptyset, \yd{2,2,1}]} g_\ka.
	\label{eq:I321/1}
	\end{align}
	
	Next we compute the second term of \eqref{eq:I_skew},
	$\sum_{(1)\subset\nu\subset(3,2,1)}g_{\nu/(1)}$.
	Again using \eqref{eq:skewPieri} we have
	\begin{gather*}
	\dgs{3,2,1}{1} = \dg{3,2} + \dg{3,1,1} + \dg{2,2,1} - \dg{3,1} - \dg{2,2} - \dg{2,1,1} + \dg{2,1}, \\
	\dgs{3,2}{1} = \dg{3,1} + \dg{2,2} - \dg{2,1}, \qquad
	\dgs{3,1,1}{1} = \dg{3,1} + \dg{2,1,1} - \dg{2,1}, \qquad
	\dgs{2,2,1}{1} = \dg{2,2} + \dg{2,1,1} - \dg{2,1}, \\
	\dgs{3,1}{1} = \dg{3} + \dg{2,1} - \dg{2}, \qquad
	\dgs{2,2}{1} = \dg{2,1}, \qquad
	\dgs{2,1,1}{1} = \dg{2,1} + \dg{1,1,1} - \dg{1,1}, \\
	\dgs{3}{1} = \dg{2}, \qquad
	\dgs{2,1}{1} = \dg{2} + \dg{1,1} - \dg{1}, \qquad
	\dgs{1,1,1}{1} = \dg{1,1}, \\
	\dgs{2}{1} = \dg{1}, \qquad
	\dgs{1,1}{1} = \dg{1}, \qquad
	\dgs{1}{1} = \dg{}.
	\end{gather*}
	Summing them up, we have
	\begin{equation}\label{eq:sum_nu/1}
	\sum_{\yd{1}\subset\nu\subset\yd{3,2,1}}g_{\nu/\yd{1}} = 
	\sum_{\ka\in [\yd{}, \yd{3,2}]\cup[\yd{}, \yd{3,1,1}]\cup[\yd{}, \yd{2,2,1}]} g_\ka.
	\end{equation}

	Finally we compute the last term of \eqref{eq:I_skew},
	$\sum_{(1)\subset\nu\subset(3,2,1)}g_{(3,2,1)/\nu}$.
	We can check
	\begin{gather*}
	\dgs{3,2,1}{3,2,1} = \dg{}, \qquad
	\dgs{3,2,1}{3,2} = \dg{1}, \qquad
	\dgs{3,2,1}{3,1,1} = \dg{1}, \qquad
	\dgs{3,2,1}{2,2,1} = \dg{1}, \\
	\dgs{3,2,1}{3,1} = \dg{2} + \dg{1,1} - \dg{1}, \qquad
	\dgs{3,2,1}{2,2} = \dg{2} + \dg{1,1} - \dg{1}, \qquad
	\dgs{3,2,1}{2,1,1} = \dg{2} + \dg{1,1} - \dg{1}, \\
	\dgs{3,2,1}{3} = \dg{2,1}, \qquad
	\dgs{3,2,1}{2,1} = \dg{3} + 2\dg{2,1} + \dg{1,1,1} - 2\dg{2} - 2\dg{1,1} + \dg{1}, \qquad
	\dgs{3,2,1}{1,1,1} = \dg{2,1}, \\
	\dgs{3,2,1}{2} = \dg{3,1} + \dg{2,2} + \dg{2,1,1} - 2\dg{2,1}, \qquad
	\dgs{3,2,1}{1,1} = \dg{3,1} + \dg{2,2} + \dg{2,1,1} - 2\dg{2,1}, \\
	\dgs{3,2,1}{1} = \dg{3,2} + \dg{3,1,1} + \dg{2,2,1} - \dg{3,1} - \dg{2,2} - \dg{2,1,1} + \dg{2,1}.
	\end{gather*}
	Summing them up, we have
	\begin{equation}\label{eq:sum_321/nu}
	\sum_{\yd{1}\subset\nu\subset\yd{3,2,1}}g_{\yd{3,2,1}/\nu} = 
	\sum_{\ka\in [\yd{}, \yd{3,2}]\cup[\yd{}, \yd{3,1,1}]\cup[\yd{}, \yd{2,2,1}]} g_\ka.
	\end{equation}
	
	Hence we see $\eqref{eq:I321/1}=\eqref{eq:sum_nu/1}=\eqref{eq:sum_321/nu}$,
	verifying \eqref{eq:I_skew}.
\end{exam}

\begin{rema}
	From \eqref{eq:I321/1} in the example above and
	a Pieri-type formula given in \cite{dualstablesum}
	($I(g_\la)I(g_{(k)}) = \sum_{\mu} g_\mu$,
	summed over $\mu$ satisfying the set difference $\mu\sm\la$ is a horizontal strip of size $\le k$),
	one may expect positivity in the expansions
	$I(g_{\la/\mu}) = \sum_{\nu} \tilde{c}^{\la}_{\mu\nu} g_\nu$ and
	$I(g_\mu)I(g_\nu)=\sum_{\la}\tilde{d}^{\la}_{\mu\nu} g_\la$
	(note that
	$\tilde{c}^{\la}_{\mu\nu} 
	= \sum_{\mu\subset\ka\subset\la} c^{\la}_{\ka,\nu}
	= \sum_{\mu\subset\ka\subset\la} c^{\ka}_{\mu,\nu}$
	and
	$\tilde{d}^{\la}_{\mu\nu} = \sum_{\substack{\alpha\subset\mu \\ \beta\subset\nu}} d^{\la}_{\alpha,\beta}$).
	However, neither hold in general;
	a counterexample for the former is
	$\tilde{c}^{(53221)}_{(321),(321)}=-1$,
	and one for the latter is
	$\tilde{d}^{(5321)}_{(321),(321)}=-1$.
\end{rema}

\bibliographystyle{abbrv}
\input{main.bbl}

\end{document}